\newtheorem{theorem}{Theorem}[section]
\newtheorem{thm}[theorem]{Theorem}
\newtheorem{lemma}[theorem]{Lemma}
\newtheorem{lem}[theorem]{Lemma}
\newtheorem{proposition}[theorem]{Proposition}
\newtheorem{corollary}[theorem]{Corollary}
\theoremstyle{definition}
\newtheorem{definition}[theorem]{Definition}
\newtheorem{defn}[theorem]{Definition}
\theoremstyle{remark}
\newtheorem{remark}[theorem]{Remark}
\newtheorem{rem}[theorem]{Remark}
\numberwithin{equation}{section}
 \DeclareMathAlphabet{\mathpzc}{OT1}{pzc}{m}{it}
  \newcommand{\dif}{\mathrm{d}}
 \newcommand{\A}{\mathcal{A}}
 \newcommand{\HH}{\mathbb{\mathbb{H}}}
 \newcommand{\F}{\mathcal{F}}
 \newcommand{\cov}{\mathrm{cov}}       
 \newcommand{\E}{\mathbb{E}}            
 \newcommand{\e}{\varepsilon}
 \newcommand{\Ll}{\langle}
 \newcommand{\Rr}{\rangle}
 \newcommand{\N}{\mathbb{N}}
 \newcommand{\R}{\mathbb{R}}
 \newcommand{\PP}{\mathbb{P}}
 \newcommand{\Be}{\begin{equation}}
 \newcommand{\Ee}{\end{equation}}
  \newcommand{\Bes}{\begin{equation*}}
 \newcommand{\Ees}{\end{equation*}}
  \newcommand{\Bey}{\begin{eqnarray}}
 \newcommand{\Eey}{\end{eqnarray}}
 \newcommand{\Beys}{\begin{eqnarray*}}
 \newcommand{\Eeys}{\end{eqnarray*}}
 \newcommand{\BT}{\begin{thm}}
 \newcommand{\ET}{\end{thm}}
 \newcommand{\Bp}{\begin{proof}}
 \newcommand{\Ep}{\end{proof}}
 \newcommand{\BL}{\begin{lem}}
 \newcommand{\EL}{\end{lem}}
 \newcommand{\BP}{\begin{proposition}}
 \newcommand{\EP}{\end{proposition}}
 \newcommand{\BC}{\begin{corollary}}
 \newcommand{\EC}{\end{corollary}}
 \newcommand{\BR}{\begin{rem}}
 \newcommand{\ER}{\end{rem}}
 \newcommand{\BD}{\begin{defn}}
 \newcommand{\ED}{\end{defn}}
 \newcommand{\BI}{\begin{itemize}}
 \newcommand{\EI}{\end{itemize}}
\begin{document}

\title[ ]
{ Almost sure invariance principle of $\beta-$mixing time series in Hilbert space}

\author[J. Lu]{Jianya Lu}
\address[J. Lu]{Department of Mathematics, University of Macau; Department of Mathematics, University of Essex}
\email{jylu2018@gmail.com}

\author[W. B. Wu]{Wei Biao Wu}
\address[W. B. Wu]{Department of Statistics, University of Chicago}
\email{wbwu@galton.uchicago.edu}

\author[Z. Xiao]{Zhijie Xiao}
\address[Z. Xiao]{Department of Economics, Boston College}
\email{zhijie.xiao@bc.edu}

\author[L.~Xu]{Lihu Xu}
\address[L.~Xu]{Department of Mathematics, Faculty of Science and Technology, University of Macau, Macau S.A.R., China }
\email{lihuxu@um.edu.mo}

\maketitle

\begin{abstract}
Inspired by \citet{Berkes14} and \citet{Wu07}, we  prove an almost sure invariance principle for stationary $\beta-$mixing stochastic processes defined on Hilbert space. Our result can be applied to Markov chain satisfying Meyn-Tweedie type Lyapunov condition and thus generalises the contraction condition in \citet[Example 2.2]{Berkes14}. 
We prove our main theorem by the big and small blocks technique and an embedding result in \citet{gotze2011estimates}. Our result is further applied to the ergodic Markov chain and functional autoregressive processes.

	{\bf Key words}: Almost sure invariance principle, Hilbert space, $\beta$-mixing time series. 
	
	{\bf MSC2020}: 60F17,60G10
\end{abstract}

\section{Introduction}\label{sec:intro}
Let $\mathbb H$ be a separable Hilbert space with orthonormal basis $\{{\boldsymbol e}_{k}\}_{k \in \N}$, denote by $\Ll\cdot,\cdot\Rr$ and  $\|\cdot\|$ the associated inner product and norm respectively. For any ${\boldsymbol x} \in \mathbb H$, there exists {   an} unique representation ${\boldsymbol x} =\sum_{k=1}^{\infty} x_{k} {\boldsymbol e}_{k}$ with $x_{k} \in \R$ for $k \ge 1$ and $\sum_{k=1}^{\infty} x^{2}_{k}<\infty$, and thus we can represent ${\boldsymbol x}$ by the sequence $(x_{1},...,x_{k},...)^T$, where $T$ is the transpose operator.   For positive integer $d$ and any ${\boldsymbol x}\in\HH$, $\HH$ can be decomposed into 
$$\HH=\HH_{\le d}+\HH_{> d}$$
with element $(x_1,...,x_d)^T\in \HH_{\le d}$ and $(x_{d+1},x_{d+2}...)^T\in \HH_{> d}$.

Let ${\boldsymbol X}$ and ${\boldsymbol Y}$ be two $\mathbb H$-valued random variables, denote by $\PP_{{\boldsymbol X} \times {\boldsymbol Y}}$ the joint probability of $({\boldsymbol X},{\boldsymbol Y})$ and by $\PP_{\boldsymbol X}$, $\PP_{\boldsymbol Y}$ the probabilities of ${\boldsymbol X}$ and ${\boldsymbol Y}$ respectively, define
\begin{eqnarray*}
\beta\left({\boldsymbol X},{\boldsymbol Y}\right):=\|\PP_{{\boldsymbol X}\times {\boldsymbol Y}}-\PP_{\boldsymbol X}\times\PP_{\boldsymbol Y}\|_{\mathrm{TV}},
\end{eqnarray*}
where $\|.\|_{\mathrm{TV}}$ is the total variation norm of probability {measures}, i.e.
$$\|\PP_{{\boldsymbol X}\times {\boldsymbol Y}}-\PP_{\boldsymbol X}\times\PP_{\boldsymbol Y}\|_{\mathrm{TV}}=\sup_{f \in C_b(\mathbb H \times \mathbb H): \|f\|_\infty \le 1} |\PP_{{\boldsymbol X}\times {\boldsymbol Y}}(f)-\PP_{\boldsymbol X}\times\PP_{\boldsymbol Y}(f)|. $$
Let $({\boldsymbol X}_{k})_{k \in \N_{0}}$ be an $\mathbb H$-valued time series with $\N_{0}=\{0\} \cup \N$, its $\beta-$mixing coefficient is defined as
\begin{eqnarray}\label{e:mixing}
\beta(n)=\sup_{k\ge1}\beta\left(({\boldsymbol X}_i)_{0\le i\le k}, ({\boldsymbol X}_i)_{i\ge n+k}\right).
\end{eqnarray}
In this paper, we assume that

{\bf (A1)}  $({\boldsymbol X}_{k})_{k \in \N_{0}}$  defined on the probability space $(\Omega,\F,\PP)$ is stationary with {   a} stationary measure $\pi$ and exponentially $\beta$-mixing, i.e., there exist some $C>0$ and $\beta>0$ such that
$$\beta(n) \le C e^{-\beta n}, \ \ \ \ n \in \N.$$

{\bf (A2)} There exists some positive linear operator $\boldsymbol\Gamma: \mathbb H \rightarrow \mathbb H$ such that
$$\lim_{n \rightarrow \infty}\frac{\cov(\sum_{k=0}^{n-1}{\boldsymbol X}_{k})}{n}=\boldsymbol\Gamma,$$
where $\cov(.)$ is the covariance operator of the $\mathbb H$-valued random variable. Moreover, $\boldsymbol \Gamma$ have positive
eigenvalues $\{\lambda_{k}\}_{k \in \N}$  with $\lambda_{1} \ge \lambda_{2} \ge ... > 0$.  There exist $\delta_1{  \ge}\delta_2>1$ and positive constants $C_1$ and $C_2$, such that
$$ C_1k^{-\delta_1}\le\lambda_{k} \le C_2k^{-\delta_2}, \ \ \ \ \ \ k \in \N.$$
\begin{remark}
It is easy to obtain
 $$\boldsymbol\Gamma=\cov({\boldsymbol X}_0)+\sum_{k=1}^{\infty} \big(\cov({\boldsymbol X}_0,{\boldsymbol X}_k)+\cov({\boldsymbol X}_0,{\boldsymbol X}_k)^T\big).$$	
\end{remark}
\begin{remark}
{\bf (A2)} implies that $\boldsymbol \Gamma$ has effective rank and the eigenvalues of covariance operator polynomially decay. Similar conditions can be found in    \cite{reiss2020nonasymptotic}, \citet{lopes2019bootstrapping} and citations therein. 
\end{remark}
\subsection{Motivations and literature review}
Let $(\boldsymbol Y_k)_{k\in\N_0}$ be random variables with $\E\boldsymbol Y_k=0$ and $\E|\boldsymbol Y_k|^p<\infty$, $(\boldsymbol Y_k)_{k\in\N_0}$ satisfies the almost sure invariance principle (ASIP) with rate $r_n$ if there exists, after suitably enlarging the probability space, independent Gaussian random variables  $(\boldsymbol\eta_k)_{k\in\N_0}$ with covariance $\cov(\boldsymbol Y_k)$, that is $\boldsymbol\eta_k\sim\mathcal N(0,\cov(\boldsymbol Y_k))$ such that 
\begin{eqnarray*}
\max_{1\le i\le n}|\sum_{k=1}^{i}\boldsymbol Y_k-\sum_{k=1}^{i}\boldsymbol\eta_k|=o(r_n),\quad \text{a.s.},
\end{eqnarray*}
where $o(\cdot)$ defined as follows, let $\{a_n\}_{n \ge 1}$ and $\{b_n\}_{n \ge 1}$ be two nonnegative real number sequences, if $\lim_{n \rightarrow \infty} \frac{a_n}{b_n}=0$, we write $a_n=o(b_n)$. We further denote $a_n\asymp b_n$ if there exist positive constants $c$ and $C$ such that $ca_n\le b_n\le Ca_n$.

The ASIP was introduced by \citet{strassen1964invariance,strassen1967almost} to prove the functional law of iterated logarithm for independent, identically distributed (i.i.d.) random variables. Besides that, the ASIP implies Donsker's theorem. \citet{komlos1975approximation,komlos1976approximation} considered this problem for i.i.d. random variables with $p-$th moment and got the optimal rate $n^{1/p}$ for $p>2$.

For the ASIP of dependent random variables, \citet{Wu07} used martingale approximation and the {Skorokhod} embedding to prove the ASIP of rate $n^{1/p}(\log n)^{1/2}$ for a class of stationary processes with $2<p<4$ on $1$-dimensional space. However, due to \cite{monrad1991problem} which showed that one can not embed a general $\R^d-$valued martingale in an $\R^d-$valued Gaussian process, it is difficult to extend the martingale embedding method to $d$-dimensional space. \cite{Liu09} used the block technique to get the rate $n^{1/p}$ for $2<p<4$ on $\R^d$ space. Within this framework, \cite{Berkes14} followed the block technique and some skills to construct the independence of the block sum, they got the rate $n^{1/p}$ for $p>2$ on $\R$. For more research about the ASIP on $\R$, we refer the reader to \citet{lu1987strong, merlevede2012strong, cuny2020rates1,cuny2020rates2}.

For the multidimensional invariance principle, \cite{zaitsev1998multidimensional}  obtained optimal rate for the multivariate version of the KMT theorem (see, \cite{komlos1975approximation,komlos1976approximation}),  and then established multivariate versions of Sakhanenko’s theorem, see \cite{zaitsev2001multidimensional,zaitsev2002multidimensional1,zaitsev2002multidimensional2}. We refer the reader to \cite{gotze2009bounds,gotze2010rates} for the follow-up work. These Gaussian approximation results for independent multivariate random variables give a tool to prove the ASIP for multidimensional dependent random variables. \cite{gouezel2010almost} used the block technique and constructed the independence for the block sums, then got the ASIP for multidimensional dynamical system with the rate greater than $n^{1/4+1/(4p-4)}$ for $p>2$ on $\R^d-$space under the mixing condition which is  represented by the characteristic function. Following similar conditions, \cite{hafouta2020almost} got the rate greater than $n^{1/4}$ for uniformly bounded $\phi-$mixing sequence defined on $d$-dimensional space.

The motivations of studying the ASIP of stationary $\beta-$mixing time series are two folds. One is that there have been many ASIP results for dependent $\R$ and $\R^d$ valued random variables, see the references above, whereas there are very few this type of	ASIP results for dependent time series in Hilbert space, see \cite{dedecker2010almost, cuny2014martingale} and the references therein. The other is that  $\beta-$mixing property is closely related to the concept of ergodicity for a stationary Markov chain, which can be verified by the Lyapunov condition, see \cite{Davy1973,MeTw93b}. Our result provides a weaker condition comparing with the condition of \citet[Example 2.2]{Berkes14}, which is satisfied by many examples.


\subsection{Main result and methodology}
 For random variables $\xi$ and $\eta$, let $\xi\overset{\mathscr{D}}{=}\eta$ denote they have the same distribution.  
Our main result is given as follows.
\begin{theorem}\label{thm-asip}
Assume that {\bf (A1)} and {\bf (A2)}  hold  and that the stationary measure $\pi$ has $p$-th moment with $p>2$. Then there exists a probability space on which we can define random variables $\boldsymbol X^*_i$ and  i.i.d. Gaussian random variables ${\boldsymbol \eta}_i$ such that {$(\boldsymbol X_i)_{1\le i\le n}\overset{\mathscr{D}}=(\boldsymbol X^*_i)_{1\le i\le n}$}, $\boldsymbol\eta_i \sim \mathcal N(\boldsymbol 0,\boldsymbol \Gamma)$ and
    \begin{align*}
    \max_{1\le i\le n }\left\|\sum_{j=1}^i (\boldsymbol X^*_j-\pi(\boldsymbol X))-\sum_{j=1}^i\boldsymbol\eta_j\right\|=o(n^{\bar\theta}),\quad \text{a.s.},
    \end{align*}	
     where { $\boldsymbol\Gamma$ defined in {\bf (A2)} and  
     $\bar\theta>\max\big\{\frac{(2p-2)\delta_1+2\delta_2+23p-4}{44p-4+(4p-4)\delta_1+2p\delta_2},\frac{(2p-2)\delta_1+2\delta_2+p(p+4)/2-4}{p(p+2)-4+(4p-4)\delta_1+2p\delta_2}\big\}$.  } 
\end{theorem}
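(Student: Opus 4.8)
The plan is to combine a finite-dimensional truncation with the big-and-small-blocks method, thereby reducing the infinite-dimensional ASIP to a strong Gaussian approximation for independent random vectors, to which the embedding result of \cite{gotze2011estimates} applies. Throughout, write $\boldsymbol Y_i=\boldsymbol X_i-\pi(\boldsymbol X)$ for the centred process and let $\proj_d$ be the orthogonal projection of $\HH$ onto $\HH_{\le d}$, so that $\boldsymbol Y_i=\proj_d\boldsymbol Y_i+(I-\proj_d)\boldsymbol Y_i$. First I would fix a dimension $d=d_n$ growing with $n$ and split the target sum into a low-frequency part in $\HH_{\le d}$ and a high-frequency tail in $\HH_{>d}$. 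Because {\bf (A2)} forces $\mathrm{tr}\big((I-\proj_d)\boldsymbol\Gamma\big)\le C\sum_{k>d}k^{-\delta_2}\asymp d^{1-\delta_2}$, a maximal inequality adapted to the $\beta$-mixing structure shows that $\max_{1\le i\le n}\|\sum_{j=1}^i(I-\proj_d)\boldsymbol Y_j\|$ is of order $\sqrt{n}\,d^{(1-\delta_2)/2}$ up to logarithmic factors; this is the first error term to be balanced.

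For the low-frequency part I would partition $\{1,\dots,n\}$ into alternating big blocks of length $\ell=\ell_n$ and small blocks of length $s=s_n$, setting $U_j=\sum_{i\in H_j}\proj_d\boldsymbol Y_i$ and $V_j=\sum_{i\in I_j}\proj_d\boldsymbol Y_i$ for the big- and small-block sums. The small blocks act as buffers: since {\bf (A1)} gives $\beta(s)\le Ce^{-\beta s}$, a Berbee-type coupling lets me construct, on an enlarged space, independent vectors $\tilde U_j$ with $\tilde U_j\overset{\mathscr D}{=}U_j$ and $\sum_j\PP(\tilde U_j\ne U_j)\lesssim(n/\ell)\,\beta(s)$; choosing $s\asymp\log n$ makes this coupling error summable, hence negligible for an almost sure statement. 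The small-block sums are handled separately: a moment bound using $\mathrm{tr}(\proj_d\boldsymbol\Gamma)\asymp\sum_{k\le d}\lambda_k=O(1)$ yields $\max_m\|\sum_{j\le m}V_j\|=O\big(\sqrt{(n/\ell)\,s}\big)$, which is of lower order once $s\ll\ell$.

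With the $\tilde U_j$ independent and of finite $p$-th moment, I would invoke the strong approximation of \cite{gotze2011estimates} to build independent Gaussian vectors $\boldsymbol\zeta_j$ in $\HH_{\le d}$, matching the covariances $\cov(\tilde U_j)=\cov(U_j)$, so that $\max_m\|\sum_{j\le m}(\tilde U_j-\boldsymbol\zeta_j)\|$ obeys the quantitative rate furnished there. That rate depends on the number of blocks $n/\ell$, the block length $\ell$, the dimension $d$, and the moment $p$, and its dependence on the smallest retained eigenvalue $\lambda_d\ge C_1 d^{-\delta_1}$ is exactly what brings $\delta_1$ into the final exponent. A covariance correction then replaces $\cov(U_j)$ by $\ell\,\proj_d\boldsymbol\Gamma$: by {\bf (A2)} the discrepancy $\|\cov(U_j)-\ell\,\proj_d\boldsymbol\Gamma\|$ is controlled, and an additional Gaussian coupling absorbs it, so that the aggregated Gaussians carry covariance $\boldsymbol\Gamma$ per unit time and extend to the i.i.d. sequence $\boldsymbol\eta_i\sim\mathcal N(\boldsymbol 0,\boldsymbol\Gamma)$ required in the statement.

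Assembling the truncation tail, the small-block bound, the covariance correction, and the embedding rate, and upgrading the probabilistic estimates to an almost sure maximal statement via Borel--Cantelli along a geometric subsequence, leaves an error that is a sum of powers of $n$ in which $d_n$ and $\ell_n$ are free parameters. The crux, and the main obstacle, is the simultaneous optimisation of these two parameters: enlarging $d$ shrinks the $d^{(1-\delta_2)/2}$ tail but inflates the dimension-dependent embedding rate through $\lambda_d^{-1}\asymp d^{\delta_1}$, while enlarging $\ell$ reduces the number of blocks but worsens the per-block approximation. Balancing all of these contributions against one another is precisely what produces the two competing fractions defining $\bar\theta$; keeping the effective rank of $\boldsymbol\Gamma$ under control so that the embedding error stays bounded as $d_n\to\infty$ is the delicate technical heart of the argument.
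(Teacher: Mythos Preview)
Your outline is correct and matches the paper's proof essentially step for step: dyadic decomposition of $\mathbb N$, big blocks of polynomial length with logarithmic buffers, Berbee coupling to manufacture independence, the G\"otze--Zaitsev embedding on $\HH_{\le d}$, a covariance correction to pass from block covariances to $\ell\,\boldsymbol\Gamma$, control of the $\HH_{>d}$ tail through $\sum_{k>d}\lambda_k\asymp d^{1-\delta_2}$, Borel--Cantelli along $2^m$, and finally the joint optimisation in $d$ and the block length that produces the two fractions in $\bar\theta$. The only cosmetic difference is that the paper blocks first in $\HH$ and projects the block sums, whereas you project first and then block; since $\proj_d$ is linear the two orderings are equivalent, and the paper likewise handles the high-frequency Gaussians $\boldsymbol\eta_{m,j}^{[d]}$ alongside the tail of $\tilde{\boldsymbol Y}_{m,j}$ rather than building them in a separate extension step as you suggest.
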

\begin{corollary}\label{coro-asip}
Under the conditions of Theorem \ref{thm-asip}, {for any $\e>0$, there exists a $\bar\delta_{p,\e}$ depends on $p$ and $\e$ and defined in  
\eqref{e:delta} such that as  $\lambda_k\asymp k^{-\delta}$ and  $\delta>\bar\delta_{p,\e}$,} there exists a probability space on which we can define random variables $\boldsymbol X^*_i$ and  i.i.d. Gaussian random variables $\boldsymbol\eta_i$ such that { $(\boldsymbol X_i)_{1\le i\le n}\overset{\mathscr{D}}=(\boldsymbol X^*_i)_{1\le i\le n}$}, $\boldsymbol\eta_i\sim \mathcal N(\boldsymbol 0,\boldsymbol\Gamma)$ and
\begin{align} \label{e:CorollaryPoly}
\max_{1\le i\le n }\left\|\sum_{j=1}^i (\boldsymbol X^*_j-\pi(\boldsymbol X))-\sum_{j=1}^i\boldsymbol\eta_j\right\|=o\big(n^{\frac13+\frac{2}{3(3p-2)}+\e}\big),\quad \text{a.s.}.
\end{align}	

\end{corollary}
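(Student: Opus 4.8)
The plan is to obtain the corollary as a direct specialisation of Theorem \ref{thm-asip} to the case of strict polynomial decay. First I would note that the hypothesis $\lambda_k \asymp k^{-\delta}$ is exactly condition \textbf{(A2)} with $\delta_1 = \delta_2 = \delta$, so Theorem \ref{thm-asip} applies with this choice. Substituting $\delta_1 = \delta_2 = \delta$ gives $(2p-2)\delta_1 + 2\delta_2 = 2p\delta$ and $(4p-4)\delta_1 + 2p\delta_2 = (6p-4)\delta$, so the maximum governing $\bar\theta$ collapses to
\begin{align*}
\max\left\{\ \frac{2p\delta + 23p - 4}{(6p-4)\delta + 44p - 4},\ \ \frac{2p\delta + p(p+4)/2 - 4}{(6p-4)\delta + p(p+2) - 4}\ \right\}.
\end{align*}

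Next I would treat each ratio as a linear-fractional function of $\delta$, i.e. of the form $(a\delta+b)/(c\delta+d)$ with $a = 2p$ and $c = 6p-4$ in both cases. A short computation of $ad-bc$ shows that for the first ratio $ad-bc = -(p-2)(50p-8)$ and for the second $ad-bc = -(p-2)(p^2+8p-8)$; both are strictly negative for $p>2$, since each factors as $-(p-2)$ times a quantity positive on $(2,\infty)$. Hence each ratio is strictly decreasing in $\delta$ and approaches its horizontal asymptote $a/c = 2p/(6p-4) = p/(3p-2)$ from above. Using the elementary identity
\begin{align*}
\frac{p}{3p-2} = \frac{1}{3} + \frac{2}{3(3p-2)},
\end{align*}
this common limit is exactly the exponent appearing in \eqref{e:CorollaryPoly}, so the displayed maximum decreases to $\tfrac13 + \tfrac{2}{3(3p-2)}$ as $\delta \to \infty$.

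To produce the explicit threshold $\bar\delta_{p,\e}$ of \eqref{e:delta} I would set $\theta := \tfrac13 + \tfrac{2}{3(3p-2)} + \e = \tfrac{p}{3p-2} + \e$ and solve each of the two inequalities $(a\delta+b)/(c\delta+d) < \theta$. Clearing the positive denominators turns each into a linear inequality whose coefficient of $\delta$ is $2p - \theta(6p-4)$; since $\tfrac{p}{3p-2}(6p-4) = 2p$, the leading term cancels and this coefficient equals $-\e(6p-4) < 0$. Each inequality is therefore equivalent to $\delta$ exceeding an explicit positive constant of the form $\big[(\text{numerator})\big]/[\e(6p-4)]$ (for the first ratio, $\delta > [(23p-4) - \theta(44p-4)]/[\e(6p-4)]$, and analogously for the second). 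Defining $\bar\delta_{p,\e}$ as the larger of these two constants yields \eqref{e:delta}, and for $\delta > \bar\delta_{p,\e}$ the monotonicity established above guarantees that the whole maximum lies strictly below $\theta$. Thus the hypothesis $\bar\theta > \max\{\cdots\}$ of Theorem \ref{thm-asip} holds with $\bar\theta = \tfrac13 + \tfrac{2}{3(3p-2)} + \e$, which is precisely the rate asserted in \eqref{e:CorollaryPoly}.

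The argument is essentially algebraic, so there is no deep obstacle; the only point requiring genuine care is the sign check $ad-bc<0$ for both ratios. Both discriminant-type expressions vanish at $p=2$, so they must be factored correctly to confirm that the convergence to the asymptote is monotone from above on the full range $p>2$ — this is what makes the threshold $\bar\delta_{p,\e}$ correspond to a single crossing and hence valid for \emph{all} $\delta > \bar\delta_{p,\e}$ rather than on some disconnected set.
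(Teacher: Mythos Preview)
Your proposal is correct and follows essentially the same route as the paper: substitute $\delta_1=\delta_2=\delta$ into the bound of Theorem~\ref{thm-asip}, observe that both linear-fractional expressions decrease to the common limit $p/(3p-2)=\tfrac13+\tfrac{2}{3(3p-2)}$, and solve the resulting linear inequalities in $\delta$ to produce the threshold~\eqref{e:delta}. In fact you supply more justification than the paper does---your $ad-bc$ sign check establishes the monotonicity that the paper merely asserts, and your ``take the larger of the two constants'' is exactly what the paper's case split at $p=42$ encodes.
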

\begin{remark}
For random variables defined on $\R^d$  with condition {\bf (A1)} and further assume $\boldsymbol\Gamma$ is a positive semidefinite matrix. { As we shall see in step 1 of the proof of Lemma \ref{lem:Zaitsev}, by the block technique and the invariance principle of i.i.d. random vectors in \citet[Corollary 3]{zaitsev2007estimates}, when $\lambda>\frac14+\frac{1}{4(p-1)}$, the following ASIP holds,}
\begin{align}\label{e:remark}
\max_{1\le i\le n }\left\|\sum_{j=1}^i (\boldsymbol X^*_j-\pi(\boldsymbol X))-\sum_{j=1}^i\boldsymbol\eta_j\right\|=o\big((d^{8}\log d) n^{\lambda}\big),\quad \text{a.s.}.
\end{align}
This rate is same as \citet[Theorem 1.2]{gouezel2010almost} which is under the mixing condition tied to spectral properties. As $\lambda_k\asymp e^{-|k|^\gamma}$ with $\gamma>0$, the rate on the right hand side of \eqref{e:CorollaryPoly} can be improved to $o\big(n^{\frac13+\frac{2}{3(3p-2)}}\big)$ by our method. One may intuitively see this by letting $\delta_1$ and $\delta_2$ tend to $\infty$ in Theorem \ref{thm-asip}. 
\end{remark}

Let us briefly describe our approach as follows.  We divide the positive integer number $\N$ into the intervals $[2^m+1, 2^{m+1}]$ for $m=0,1,...$. Each interval is partitioned into a sequence of big blocks with length $2^{\alpha_1m}$ for $\alpha_1\in(0,1)$ and small blocks with length $m$. Following the properties of $\beta-$mixing, we can construct i.i.d. random variables distributed as the big block sums in $\HH$. { The projection of these random variables on $\HH_{\le d}$ are comparable with Gaussian random variables following \citet{gotze2011estimates}, while the residual on $\HH_{>d}$ and small block sums are negligible.} After carefully choosing the relation between the dimension $d$ and the sample size $n$, we can get the result.


\subsection{Organization of the paper and some notations}
The paper is organized as follows. Our main result is stated in Section \ref{sec:intro}. In Section \ref{sec:lemma}, we provide some preliminary lemmas and the technique for the proof of our main result. The proof of the ASIP is given in Section \ref{sec:main}. In Section \ref{sec:example}, we give some applications. The proof of the crucial lemmas in Section \ref{sec:lemma} is deferred to Appendix A and B,  and the proofs of examples in Section \ref{sec:example} are given in Appendix C.

We finish this section by introducing some notations which will be frequently used in sequel. For variables $(X_i)_{i\in\N_0}\in\HH$, $X_{i,j}$ denotes the $j-$th element of $X_i$.  We denote $\xi^*$ the random variable has the same distribution with $\xi$.  For an operator $A:\HH\to\HH$, $\|A\|_F$ for operator on $\HH$ denote the Frobenius norm of $A$, i.e., $\|A\|_F=\sqrt{\text{Tr}(AA^T)}$. For $x\in\R$, $\lceil x\rceil$ and $\lfloor x\rfloor$   smallest integer greater than $x$ and the integer part of $x$ respectively.
The symbols $C$ and $c$ denote positive numbers, $C_p$ and $c_p$ denote positive numbers depend on the parameter $p$. Their values may vary from line to line.

For the stationary process $(\boldsymbol X_k)_{k\in\N_0}$  with ergodic measure $\pi$, without loss of generality, from now on we assume 
$$\pi(\boldsymbol X_0)=\boldsymbol 0.$$

\section{Auxiliary Lemmas for Theorem \ref{thm-asip}}\label{sec:lemma}
The strategy of proving Theorem \ref{thm-asip} is to decompose $\sum_{i=1}^n \boldsymbol X_i$ into two parts by the block technique, showing that the big block sum is comparable with Gaussian random variables and the small block sum is negligible following the properties of $\beta-$mixing which are shown below. In this section, we provide some preliminary lemmas and the technique for the proof of our main result.
\subsection{Lemmas of $\beta-$mixing time series}
In this subsection, we recall the result of \cite{Berbee1987} first for the convenience of the reader, which is useful to construct the independence for $\beta-$mixing time series. At the end of this subsection, we give the following two lemmas, the first lemma paving a way for proving the last which is an extension to infinite dimensional space valued random variables of \citet[Theorem 4.1]{Shao96}.

\begin{lemma}\cite[Lemma 2]{Merle97}\label{lem:Merl}
Let  $\boldsymbol X$ and $\boldsymbol Y$ be two random variables defined on $\HH$ with quantile functions $Q_{\|\boldsymbol X\|}(u)$ and $Q_{\|\boldsymbol Y\|}(u)$. Then
\begin{eqnarray*}
|\E\Ll \boldsymbol X,\boldsymbol Y\Rr-\Ll\E \boldsymbol X, \E \boldsymbol Y\Rr|\le 18\int_0^{\bar\alpha}Q_{\|\boldsymbol X\|}(u)Q_{\|\boldsymbol Y\|}(u)\dif  u.
\end{eqnarray*}
Here $Q_{\|\boldsymbol X\|}(u)=\inf\{t:\PP(\|\boldsymbol X\|>t)\le u\}$ and $Q_{\|\boldsymbol Y\|}(u)$ is defined similarly. $\bar\alpha$ is defined by $\bar\alpha=\sup_{A\in\sigma(\boldsymbol X),B\in\sigma(\boldsymbol Y)}\big\{|\PP(A\cap B)-\PP(A)\PP(B)|\big\}.$ Similarly,
\begin{eqnarray*}
	{ \|\cov(\boldsymbol X,\boldsymbol Y)\|_F\le 18\int_0^{\bar\alpha}Q_{\|\boldsymbol X\|}(u)Q_{\|\boldsymbol Y\|}(u)\dif  u.}
\end{eqnarray*}
\end{lemma}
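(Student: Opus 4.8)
The plan is to prove the inner-product estimate first and to obtain the Frobenius estimate from it as a short corollary; the inner-product estimate is the Hilbert-space version of Rio's strong-mixing covariance inequality, so I would follow Rio's method, the only genuinely new feature being that the two vectors must be handled jointly. No centering is needed, since the quantiles on the right are those of the raw norms $\|\boldsymbol X\|,\|\boldsymbol Y\|$.

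I would start from the conditioning identity. Writing $\mathcal M=\sigma(\boldsymbol X)$ and $V=\E[\boldsymbol Y\mid\mathcal M]-\E\boldsymbol Y$ (the $\HH$-valued conditional expectation), the $\mathcal M$-measurability of $\boldsymbol X$ gives $\E\Ll\boldsymbol X,\boldsymbol Y\Rr=\E\Ll\boldsymbol X,\E[\boldsymbol Y\mid\mathcal M]\Rr$, whence
\[
\E\Ll\boldsymbol X,\boldsymbol Y\Rr-\Ll\E\boldsymbol X,\E\boldsymbol Y\Rr=\E\Ll\boldsymbol X,V\Rr,\qquad \big|\E\Ll\boldsymbol X,V\Rr\big|\le\E\big[\|\boldsymbol X\|\,\|V\|\big].
\]
Since $\|\boldsymbol X\|$ and $\|V\|$ are nonnegative, the Hardy--Littlewood (upper Fréchet) inequality gives $\E[\|\boldsymbol X\|\,\|V\|]\le\int_0^1 Q_{\|\boldsymbol X\|}(u)Q_{\|V\|}(u)\,\dif u$, so the matter is reduced to controlling the quantile profile of $\|V\|$.

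The main obstacle is precisely this control of $\|V\|$. By duality $\|V\|=\sup_{\|h\|\le1}\Ll V,h\Rr$ with the maximiser $h=V/\|V\|$ being $\mathcal M$-measurable, so quantities built from $\|V\|$ reduce to covariances $\E\Ll\boldsymbol Y,h\Rr-\Ll\E\boldsymbol Y,\E h\Rr$ between the whole vector $\boldsymbol Y$ and a bounded $\sigma(\boldsymbol X)$-measurable $\HH$-valued function $h$, for which the mixing coefficient is again at most $\bar\alpha$; carrying out Rio's truncation argument at this vectorial level shows that the quantile mass of $\|V\|$ is concentrated on $(0,\bar\alpha)$ and produces $\int_0^1 Q_{\|\boldsymbol X\|}Q_{\|V\|}\le 18\int_0^{\bar\alpha}Q_{\|\boldsymbol X\|}Q_{\|\boldsymbol Y\|}$. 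The essential point — and the reason a one-dimensional reduction does not suffice — is that $\bar\alpha$ is a single global quantity: expanding $\E\Ll\boldsymbol X,\boldsymbol Y\Rr=\sum_k\cov(X_k,Y_k)$ and bounding each coordinate covariance separately by $\bar\alpha$ (say via Hoeffding's identity and $|\cov(\one_{\{X_k\le s\}},\one_{\{Y_k\le t\}})|\le\bar\alpha$) applies the shared dependence budget once per coordinate and overcounts by a factor growing with the dimension, already for nearly independent single-coordinate variables. Keeping $\boldsymbol X,\boldsymbol Y$ together throughout is what yields the universal constant $18$ and reproduces \cite[Lemma 2]{Merle97}.

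For the Frobenius estimate I would argue by self-duality of the Frobenius norm: $\|\cov(\boldsymbol X,\boldsymbol Y)\|_F=\sup_{\|A\|_F\le1}\tr\big(A^{T}\cov(\boldsymbol X,\boldsymbol Y)\big)$, and a computation in the basis gives $\tr\big(A^{T}\cov(\boldsymbol X,\boldsymbol Y)\big)=\E\Ll\boldsymbol X,A\boldsymbol Y\Rr-\Ll\E\boldsymbol X,A\E\boldsymbol Y\Rr$. Applying the inner-product estimate to the pair $(\boldsymbol X,A\boldsymbol Y)$ bounds this by $18\int_0^{\bar\alpha}Q_{\|\boldsymbol X\|}(u)Q_{\|A\boldsymbol Y\|}(u)\,\dif u$; since $\|A\|_{\mathrm{op}}\le\|A\|_F\le1$ we have $\|A\boldsymbol Y\|\le\|\boldsymbol Y\|$, hence $Q_{\|A\boldsymbol Y\|}\le Q_{\|\boldsymbol Y\|}$, while $\sigma(A\boldsymbol Y)\subseteq\sigma(\boldsymbol Y)$ keeps the mixing coefficient at most $\bar\alpha$. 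Taking the supremum over $A$ gives the stated bound.
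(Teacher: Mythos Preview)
The paper does not prove this lemma; it is quoted directly as \cite[Lemma~2]{Merle97} and used as a black box. There is therefore no in-paper proof to compare against.

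Your plan is sound and matches the original argument in \cite{Merle97}: the conditioning identity, Cauchy--Schwarz, and the Hardy--Littlewood rearrangement reduce the inner-product bound to a quantile estimate for $\|V\|=\|\E[\boldsymbol Y\mid\sigma(\boldsymbol X)]-\E\boldsymbol Y\|$, and the Rio-type truncation you describe then controls $Q_{\|V\|}$ in terms of $Q_{\|\boldsymbol Y\|}$ on $(0,\bar\alpha)$. Your phrasing that the quantile mass of $\|V\|$ is ``concentrated on $(0,\bar\alpha)$'' is slightly imprecise --- what is actually shown is a pointwise comparison after truncating $\boldsymbol Y$ at level $Q_{\|\boldsymbol Y\|}(\bar\alpha)$, with the bounded part handled via the defining property of $\bar\alpha$ --- but the structure is correct and the constant $18$ emerges from that decomposition. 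Your derivation of the Frobenius bound by Hilbert--Schmidt duality, together with $\|A\|_{\mathrm{op}}\le\|A\|_F\le1$ and $\sigma(A\boldsymbol Y)\subseteq\sigma(\boldsymbol Y)$, is clean and correct; the paper itself offers no argument for this second inequality, so your reduction is a genuine addition.
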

It is easy to see that $Q_{\| \boldsymbol X \|}$ is the inverse function of $\PP(\| \boldsymbol X \|>t)$ and is a non-increasing function. Moreover, $Q_{\|\boldsymbol  X \|}(U)\overset{\mathscr{D}}=\| \boldsymbol X \|$, where $U$ is a random variable uniformly distributed on $[0,1]$.

\begin{lemma}\label{lem:shao}
Suppose Assumption  {\bf (A1)} holds and $\boldsymbol X_0\sim\pi$ with $p-$th moment, there exists a constant $\bar C>0$, such that for any $2\le p'<p$ and large enough $n$,
\begin{eqnarray*}
\E\big\|\sum_{i=1}^n \boldsymbol X_i\big\|^{p'}\le \bar C n^{\frac {p'}2}
\big(\pi(\|\boldsymbol X\|^p)\big)^{\frac {p'}p}.
\end{eqnarray*}
\end{lemma}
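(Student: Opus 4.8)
The plan is to combine the big-block/small-block decomposition with Berbee's coupling lemma (recalled above from \cite{Berbee1987}) so as to reduce the estimate to a Rosenthal inequality for \emph{independent} $\HH$-valued summands, whose second moment is then controlled by Lemma \ref{lem:Merl}. Throughout write $A:=\big(\pi(\|\boldsymbol X\|^p)\big)^{p'/p}$ and note that, since $2\le p'<p$, Lyapunov's inequality gives $\E\|\boldsymbol X_0\|^{p'}\le A$ and $\E\|\boldsymbol X_0\|^2\le A^{2/p'}=\big(\pi(\|\boldsymbol X\|^p)\big)^{2/p}$. The case $p'=2$ is immediate from the covariance estimate derived below and requires no blocking, so I assume $2<p'<p$.

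First I would partition $\{1,\dots,n\}$ into alternating big blocks $H_1,\dots,H_K$ of length $b=\lfloor n^{\gamma}\rfloor$, with $\gamma=\tfrac{p'-2}{2(p'-1)}\in(0,1)$, and small blocks $L_1,\dots,L_K$ of length $a=\lceil c\log n\rceil$, so that $K\asymp n/b$. Setting $\boldsymbol U_j=\sum_{i\in H_j}\boldsymbol X_i$ and $\boldsymbol V_j=\sum_{i\in L_j}\boldsymbol X_i$, Minkowski's inequality gives
\[
\Big(\E\Big\|\sum_{i=1}^n\boldsymbol X_i\Big\|^{p'}\Big)^{1/p'}\le\Big(\E\Big\|\sum_{j}\boldsymbol U_j\Big\|^{p'}\Big)^{1/p'}+\Big(\E\Big\|\sum_{j}\boldsymbol V_j\Big\|^{p'}\Big)^{1/p'}.
\]
Because consecutive big blocks are separated by a gap of length $a$, Assumption {\bf (A1)} bounds the $\beta$-mixing coefficient between $\boldsymbol U_j$ and $\sigma(\boldsymbol U_1,\dots,\boldsymbol U_{j-1})$ by $\beta(a)\le Ce^{-\beta a}$; iterating Berbee's lemma therefore produces independent $\boldsymbol U_1^*,\dots,\boldsymbol U_K^*$ with $\boldsymbol U_j^*\overset{\mathscr{D}}{=}\boldsymbol U_j$ and $\sum_j\PP(\boldsymbol U_j\ne\boldsymbol U_j^*)\le K\beta(a)$. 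The same construction applies to the small blocks, now separated by gaps of length $b$.

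Next I would apply the Rosenthal inequality in Hilbert space to the independent, mean-zero family $(\boldsymbol U_j^*)$ (each has mean $\boldsymbol 0$ by stationarity and $\pi(\boldsymbol X_0)=\boldsymbol 0$):
\[
\E\Big\|\sum_j\boldsymbol U_j^*\Big\|^{p'}\le C_{p'}\Big[\Big(\sum_j\E\|\boldsymbol U_j\|^2\Big)^{p'/2}+\sum_j\E\|\boldsymbol U_j\|^{p'}\Big].
\]
For the variance factor, expand $\E\|\boldsymbol U_j\|^2=\sum_{i,i'\in H_j}\E\Ll\boldsymbol X_i,\boldsymbol X_{i'}\Rr$ and estimate each off-diagonal term by Lemma \ref{lem:Merl}: since $\E\boldsymbol X_i=\boldsymbol 0$, one has $|\E\Ll\boldsymbol X_i,\boldsymbol X_{i'}\Rr|\le 18\int_0^{\beta(|i-i'|)}Q_{\|\boldsymbol X\|}^2(u)\,\dif u$, and Hölder together with $Q_{\|\boldsymbol X\|}(U)\overset{\mathscr{D}}{=}\|\boldsymbol X\|$ gives $\int_0^{\beta(k)}Q_{\|\boldsymbol X\|}^2\le A^{2/p'}\beta(k)^{1-2/p}$; summing the exponentially small $\beta(k)^{1-2/p}$ yields $\E\|\boldsymbol U_j\|^2\le Cb\,A^{2/p'}$, hence $\big(\sum_j\E\|\boldsymbol U_j\|^2\big)^{p'/2}\le (CKb\,A^{2/p'})^{p'/2}\le C'n^{p'/2}A$, using $Kb\le n$. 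For the $p'$-moment factor the crude convexity bound $\E\|\boldsymbol U_j\|^{p'}\le b^{p'}\E\|\boldsymbol X_0\|^{p'}\le b^{p'}A$ suffices, and the choice of $\gamma$ is made precisely so that $\sum_j\E\|\boldsymbol U_j\|^{p'}\le Kb^{p'}A=n^{\,1+\gamma(p'-1)}A=n^{p'/2}A$. Thus $\E\|\sum_j\boldsymbol U_j^*\|^{p'}\le \bar C n^{p'/2}A$.

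Finally I would show the remaining contributions are of lower order. The coupling error satisfies $\E\|\sum_j(\boldsymbol U_j-\boldsymbol U_j^*)\|^{p'}\le K^{p'-1}\sum_j\E\|\boldsymbol U_j-\boldsymbol U_j^*\|^{p'}$, and Hölder on the event $\{\boldsymbol U_j\ne\boldsymbol U_j^*\}$ gives $\E\|\boldsymbol U_j-\boldsymbol U_j^*\|^{p'}\le\big(2^{p}b^{p}\pi(\|\boldsymbol X\|^p)\big)^{p'/p}\PP(\boldsymbol U_j\ne\boldsymbol U_j^*)^{1-p'/p}$, which is at most a fixed power of $n$ times $e^{-\beta a(1-p'/p)}A$; taking $c$ large makes this $o(n^{p'/2}A)$. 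Since the small blocks carry only $Ka\asymp n^{1-\gamma}\log n\ll n$ summands, the analogous Rosenthal bound for $\sum_j\boldsymbol V_j$ is likewise $o(n^{p'/2}A)$. Collecting the three estimates in the Minkowski inequality yields $\E\|\sum_{i=1}^n\boldsymbol X_i\|^{p'}\le \bar C n^{p'/2}A$, as claimed. The main obstacle is the third step of this outline: recovering the sharp exponent $n^{p'/2}$ rather than a worse power forces the variance term to dominate, which hinges both on the covariance-summability estimate extracted from Lemma \ref{lem:Merl} (valid because the mixing coefficients decay exponentially and $\boldsymbol X_0$ has a $p$-th moment) and on tuning $b=n^{\gamma}$ so that the crude Rosenthal $p'$-moment term is no larger than $n^{p'/2}A$; securing the Hilbert-space Rosenthal inequality in the stated two-term form is the other point that needs care.
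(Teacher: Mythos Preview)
Your argument is correct, and it takes a genuinely different route from the paper's own proof. The paper does \emph{not} use Berbee's coupling here; instead it proceeds by induction on $n$ with a square-root blocking $m=\lfloor\sqrt n\rfloor$. It proves an auxiliary ``conditional Rosenthal'' lemma (their Lemma~A.1) that writes $\sum_j\boldsymbol Y_{j}$ as a martingale plus a drift and applies Pinelis' Burkholder inequality, yielding four terms: the variance term $I_{1,1}$ is handled as you do via Lemma~\ref{lem:Merl}; the drift term $I_{1,2}$ and the conditional-variance fluctuation term $I_{1,3}$ are controlled by the $\alpha$-mixing covariance inequalities of Davydov and are shown to be exponentially small in $m$; and the block $p'$-moment term $I_{1,4}$ is exactly where the induction hypothesis for the smaller length $m\approx\sqrt n$ is invoked, giving a contribution $\bar C n^{1/2+p'/4}$ that is absorbed into $\bar Cn^{p'/2}$ for $n$ large.

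By contrast, you avoid induction entirely by pushing the coupling to independence first and then quoting the two-term Rosenthal inequality for independent $\HH$-valued summands (valid in any $2$-smooth Banach space, e.g.\ via \citet{Pinelis94}). The price is the crude convexity estimate $\E\|\boldsymbol U_j\|^{p'}\le b^{p'}A$, which forces your specific block size $b=n^{(p'-2)/(2(p'-1))}$; the paper's inductive scheme instead recycles the sharp bound on block sums and so can use the dimension-free choice $b=\sqrt n$. Your approach is shorter and more transparent; the paper's approach is more self-contained (no external Rosenthal input for independent sums) and keeps the argument within $\alpha$-mixing covariance tools throughout. Either way the $p'=2$ base case is identical to what you sketch.
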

\begin{proof}
The proof is given in Appendix B.
\end{proof}
\subsection{Blocking} \label{ss:blocking}
The blocking technique is a typical way for dependent time series and applied in proving almost sure invariance principle, see e.g., \cite{Liu09,gouezel2010almost,Berkes14}.

We subdivide $\mathbb{N}$ into the intervals $[2^m+1,2^{m+1}]$. For any positive integer $m$, let $m_1=2^{\lfloor\alpha_1m\rfloor}$ and $m_2=\lfloor C^*\log 2^m\rfloor$ with $0<\alpha_1<1$ and $C^*>0$, $\alpha_1$ and $C^*$ will be chosen later. For any $n\in[2^m+1,2^{m+1}]$, we denote
$$ \kappa(n)=\lfloor \frac{n-2^m}{m_1+m_2}\rfloor.$$
For $1\le j\le \kappa(2^{m+1})$, put
\begin{eqnarray*}
I_{m,j}&=&\{i: 2^m+(m_1+m_2)(j-1)+1\le i\le 2^m+(m_1+m_2)(j-1)+m_1\},\\
J_{m,j}&=&\{i: 2^m+(m_1+m_2)(j-1)+m_1+1\le i\le 2^m+(m_1+m_2)j\},
\end{eqnarray*}
where $I_{m,j}$(resp., $J_{m,j}$) are big (resp., small) blocks. The tail is defined by
\begin{eqnarray*}
I_{m,\kappa(2^{m+1})+1}&=&\{i: 2^m+(m_1+m_2)\kappa(2^{m+1})+1\le i\le 2^{m+1}\wedge\big((m_1+m_2)\kappa(2^{m+1})+m_1\big)\},\\
J_{m,\kappa(2^{m+1})+1}&=&\{i: 2^m+(m_1+m_2)\kappa(2^{m+1})+m_1+1\le i\le 2^{m+1}\}.
\end{eqnarray*}
Thus we decompose the interval $[2^m+1,2^{m+1}]$ as a union of $\kappa(2^{m+1})+1$ big blocks and $\kappa(2^{m+1})+1$ small blocks. We further denote $i_{m,j}$ is smallest element of $I_{m,j}$, $\mathcal{I}(m)=\bigcup_{j}I_{m,j}$ and $\mathcal{J}(m)=\bigcup_{j}J_{m,j}$. Let the block sums defined by
$$\boldsymbol Y_{m,j}=\sum_{i\in I_{m,j}} \boldsymbol X_i,~~\text{and}\quad \boldsymbol Z_{m,j}=\sum_{i\in J_{m,j}} \boldsymbol X_i.$$

We introduce following lemmas to give the moment bounds for $\boldsymbol Y_{m,j}$ and $\boldsymbol Z_{m,j}$ which paving a way for showing that $\boldsymbol Y_{m,j}$ is comparable with i.i.d. $\boldsymbol {\tilde Y_{m,j}}$ distributed as $\boldsymbol Y_{m,j}$ for $j=1,...,\kappa(n)$ and small blocks $\boldsymbol Z_{m,j}$ are negligible. Their proofs are given in Appendix C.

\begin{lemma}\label{lem:Y}
Under the condition of Theorem \ref{thm-asip}, for any $2\le {p'}<p$ and $j=1,...,\kappa(2^{m+1})+1$, we have
\begin{eqnarray*}
\E\|\boldsymbol Y_{m,j}\|^{p'}\le Cm_1^{\frac {p'}2},\quad\E\|\boldsymbol Z_{m,j}\|^{p'}\le Cm_2^{\frac {p'}2}.
\end{eqnarray*}
\end{lemma}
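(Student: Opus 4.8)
The plan is to prove both moment bounds by reducing them to Lemma~\ref{lem:shao}, which already controls $\E\|\sum_{i=1}^n \boldsymbol X_i\|^{p'}$ by $\bar C n^{p'/2}(\pi(\|\boldsymbol X\|^p))^{p'/p}$. First I would observe that each block sum $\boldsymbol Y_{m,j}=\sum_{i\in I_{m,j}}\boldsymbol X_i$ and $\boldsymbol Z_{m,j}=\sum_{i\in J_{m,j}}\boldsymbol X_i$ is a sum of $\boldsymbol X_i$ over a contiguous block of consecutive indices. By the stationarity assumed in \textbf{(A1)}, the distribution of such a sum depends only on the \emph{length} of the block, not on its starting position; hence $\boldsymbol Y_{m,j}\overset{\mathscr{D}}{=}\sum_{i=1}^{|I_{m,j}|}\boldsymbol X_i$ and $\boldsymbol Z_{m,j}\overset{\mathscr{D}}{=}\sum_{i=1}^{|J_{m,j}|}\boldsymbol X_i$. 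This identity in law immediately gives equality of the $p'$-th absolute moments, so it suffices to bound the moments of the length-$\ell$ running sum for $\ell=|I_{m,j}|$ and $\ell=|J_{m,j}|$.

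Next I would record the block-length bounds coming directly from the definitions in Section~\ref{ss:blocking}. By construction $|I_{m,j}|\le m_1=2^{\lfloor\alpha_1 m\rfloor}$ and $|J_{m,j}|\le m_2=\lfloor C^*\log 2^m\rfloor$ for the regular blocks $1\le j\le\kappa(2^{m+1})$, and the tail blocks $I_{m,\kappa(2^{m+1})+1}$, $J_{m,\kappa(2^{m+1})+1}$ satisfy the same upper bounds since they are truncations of a full big/small block pair by the endpoint $2^{m+1}$. Applying Lemma~\ref{lem:shao} to the length-$|I_{m,j}|$ sum (legitimate once $m$, and hence $m_1$, is large enough, as required by the phrase ``large enough $n$'' there) yields
\begin{equation*}
\E\|\boldsymbol Y_{m,j}\|^{p'}\le \bar C\, |I_{m,j}|^{\frac{p'}2}\big(\pi(\|\boldsymbol X\|^p)\big)^{\frac{p'}p}\le \bar C\,(\pi(\|\boldsymbol X\|^p))^{\frac{p'}p}\, m_1^{\frac{p'}2},
\end{equation*}
and since $\pi$ has $p$-th moment by the hypothesis of Theorem~\ref{thm-asip}, the factor $\bar C(\pi(\|\boldsymbol X\|^p))^{p'/p}$ is a finite constant that I absorb into $C$, giving $\E\|\boldsymbol Y_{m,j}\|^{p'}\le C m_1^{p'/2}$. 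The identical argument with $|J_{m,j}|\le m_2$ gives the small-block bound $\E\|\boldsymbol Z_{m,j}\|^{p'}\le C m_2^{p'/2}$.

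The only genuine subtlety I anticipate is the ``large enough $n$'' caveat in Lemma~\ref{lem:shao}: the bound there holds for all sufficiently large block lengths, whereas small blocks have length $m_2\asymp\log 2^m$, and the tail blocks may in principle be short or even empty. For blocks whose length stays bounded, or for all $m$ below the threshold of Lemma~\ref{lem:shao}, the $p'$-th moment is finite and bounded by a constant anyway (a sum of boundedly many $\boldsymbol X_i$ each with finite $p'\le p$ moment, by Minkowski's inequality), and a bounded quantity is trivially $\le C m_1^{p'/2}$ or $\le C m_2^{p'/2}$ once the constant $C$ is chosen large enough; this handles the finitely many exceptional scales and the degenerate tail cases uniformly. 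Thus the main work is the clean reduction via stationarity to Lemma~\ref{lem:shao}, and the estimate follows after absorbing constants.
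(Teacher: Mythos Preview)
Your proposal is correct and follows exactly the same approach as the paper: the paper's proof is the single line ``Following Lemma~\ref{lem:shao}, we can get the result immediately,'' relying on stationarity to reduce the block sums to $\sum_{i=1}^{|I_{m,j}|}\boldsymbol X_i$ and $\sum_{i=1}^{|J_{m,j}|}\boldsymbol X_i$ and then invoking Lemma~\ref{lem:shao}. Your added discussion of the ``large enough $n$'' caveat and the tail blocks is a reasonable elaboration of a point the paper leaves implicit.
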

\begin{lemma}\label{lem:bigblock}
Under the condition of Theorem \ref{thm-asip}, for any $2<p'<p$, we can construct independent random variables $\boldsymbol {\tilde Y}_{m,j}$ defined on a richer probability space $(\Omega_1,\A_1,\PP_1)$ such that $\boldsymbol {\tilde Y}_{m,j}\overset{\mathscr{D}}=\boldsymbol Y_{m,j}$ for $j=1,...,\kappa(2^{m+1})$ satisfies
\begin{eqnarray}\label{e:bigblock1}
\max_{1\le i\le \kappa(2^{m+1})}\|\sum_{j=1}^{i}(\boldsymbol Y_{m,j}-\boldsymbol {\tilde Y}_{m,j})\|=o(2^{((1-\alpha_1)/{p'}+\alpha_1/2)m}\sqrt{\log 2^m}),\quad \text{a.s..}
\end{eqnarray}
Moreover,
\begin{eqnarray}\label{e:bigblock2}
	\max_{1\le i\le j}\|\sum_{\ell=1}^{i}(\boldsymbol Y_{m,\ell}-\boldsymbol {\tilde Y}_{m,\ell})\|=o(2^{((1-\alpha_1)/{p'}+\alpha_1/2 )m}\sqrt{\log 2^m}),\quad \text{a.s.}.
\end{eqnarray}

\end{lemma}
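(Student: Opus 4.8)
The plan is to realise the $\boldsymbol{\tilde Y}_{m,j}$ by Berbee's coupling lemma applied block by block, to convert the almost-sure statement into a summable expectation bound via Markov's inequality and the Borel--Cantelli lemma, and to exploit the exponential mixing of {\bf (A1)} together with the moment bound of Lemma \ref{lem:Y} to make that bound summable once $C^*$ is taken large enough. First I would enrich $(\Omega,\F,\PP)$ by an independent sequence of i.i.d. uniform$[0,1]$ variables to obtain $(\Omega_1,\A_1,\PP_1)$. Fixing $m$ and applying Berbee's lemma of \cite{Berbee1987} recursively to $\boldsymbol Y_{m,1},\dots,\boldsymbol Y_{m,\kappa(2^{m+1})}$, at step $j$ one produces $\boldsymbol{\tilde Y}_{m,j}$ from $(\boldsymbol Y_{m,1},\dots,\boldsymbol Y_{m,j})$ and an auxiliary uniform so that the $\boldsymbol{\tilde Y}_{m,j}$ are mutually independent, $\boldsymbol{\tilde Y}_{m,j}\overset{\mathscr{D}}=\boldsymbol Y_{m,j}$, and
$$\PP_1(\boldsymbol{\tilde Y}_{m,j}\neq \boldsymbol Y_{m,j})\le \beta\big(\sigma(\boldsymbol Y_{m,1},\dots,\boldsymbol Y_{m,j-1}),\sigma(\boldsymbol Y_{m,j})\big).$$
Since $I_{m,j}$ is separated from $\bigcup_{\ell<j}I_{m,\ell}$ by the small block $J_{m,j-1}$ of length $m_2$, the definition \eqref{e:mixing} and {\bf (A1)} give $\PP_1(\boldsymbol{\tilde Y}_{m,j}\neq \boldsymbol Y_{m,j})\le \beta(m_2)\le Ce^{-\beta m_2}\le C\,2^{-\beta C^* m}$.

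Next I would dominate the maximum by a single sum: by the triangle inequality, for every $i\le\kappa(2^{m+1})$,
$$\Big\|\sum_{j=1}^{i}(\boldsymbol Y_{m,j}-\boldsymbol{\tilde Y}_{m,j})\Big\|\le \sum_{j=1}^{\kappa(2^{m+1})}\|\boldsymbol Y_{m,j}-\boldsymbol{\tilde Y}_{m,j}\|,$$
so it suffices to control the expectation of the right-hand side. Each summand is supported on $\{\boldsymbol Y_{m,j}\neq\boldsymbol{\tilde Y}_{m,j}\}$, so H\"older's inequality with exponents $p'$ and $p'/(p'-1)$, the bound $\E\|\boldsymbol Y_{m,j}-\boldsymbol{\tilde Y}_{m,j}\|^{p'}\le 2^{p'+1}\E\|\boldsymbol Y_{m,j}\|^{p'}$ (using $\boldsymbol{\tilde Y}_{m,j}\overset{\mathscr{D}}=\boldsymbol Y_{m,j}$), and Lemma \ref{lem:Y} give
$$\E\|\boldsymbol Y_{m,j}-\boldsymbol{\tilde Y}_{m,j}\|\le \big(\E\|\boldsymbol Y_{m,j}-\boldsymbol{\tilde Y}_{m,j}\|^{p'}\big)^{1/p'}\PP_1(\boldsymbol{\tilde Y}_{m,j}\neq\boldsymbol Y_{m,j})^{(p'-1)/p'}\le C m_1^{1/2}\,2^{-\beta C^*m(p'-1)/p'}.$$

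Summing over $j$ and using $\kappa(2^{m+1})\le 2^m/m_1\asymp 2^{(1-\alpha_1)m}$ together with $m_1^{1/2}=2^{\alpha_1 m/2}$ yields
$$\E\Big[\max_{1\le i\le\kappa(2^{m+1})}\Big\|\sum_{j=1}^{i}(\boldsymbol Y_{m,j}-\boldsymbol{\tilde Y}_{m,j})\Big\|\Big]\le C\,2^{m[(1-\alpha_1)+\alpha_1/2-\beta C^*(p'-1)/p']}.$$
Dividing by $a_m:=2^{((1-\alpha_1)/p'+\alpha_1/2)m}\sqrt{\log 2^m}$ leaves the exponent $\frac{p'-1}{p'}\big[(1-\alpha_1)-\beta C^*\big]$, which is strictly negative as soon as $C^*>(1-\alpha_1)/\beta$. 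I would therefore fix $C^*$ in this range; then $\sum_m a_m^{-1}\E[\cdots]<\infty$, so Markov's inequality and the Borel--Cantelli lemma show that for every $\e>0$ the event $\{\max_i\|\sum_{j\le i}(\boldsymbol Y_{m,j}-\boldsymbol{\tilde Y}_{m,j})\|>\e a_m\}$ occurs for only finitely many $m$ almost surely; letting $\e\downarrow0$ gives \eqref{e:bigblock1}. Estimate \eqref{e:bigblock2} is then immediate, since for any $j\le\kappa(2^{m+1})$ the maximum over $i\le j$ is dominated by the already controlled maximum over $i\le\kappa(2^{m+1})$.

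The main obstacle is the interplay of the three scales $m_1$, $m_2$ and $\kappa$ in the exponent: the per-block coupling error shrinks like $\beta(m_2)^{(p'-1)/p'}$ but is inflated both by the block size through $m_1^{1/2}$ and by the number of blocks $\kappa\asymp 2^{(1-\alpha_1)m}$, and one must verify that the logarithmic gap $m_2=\lfloor C^*\log 2^m\rfloor$ is long enough — that is, $C^*$ large enough relative to the mixing rate $\beta$ — for the geometric decay in $m$ to survive summation while $m_2$ remains negligible against $m_1$ elsewhere in the argument. A secondary technical point is the careful bookkeeping in the iterated application of Berbee's lemma, ensuring that all the $\boldsymbol{\tilde Y}_{m,j}$ ($j=1,\dots,\kappa(2^{m+1})$) are genuinely independent and that the coupling probability at each step is controlled by $\beta(m_2)$ rather than by a cumulative error.
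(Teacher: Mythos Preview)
Your argument is correct and follows the same overall strategy as the paper: Berbee's coupling to manufacture independent copies $\boldsymbol{\tilde Y}_{m,j}$ with $\PP(\boldsymbol Y_{m,j}\neq\boldsymbol{\tilde Y}_{m,j})\le\beta(m_2)$, a moment estimate combined with Markov's inequality, and Borel--Cantelli over $m$.

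The tactical details differ in a way worth noting. The paper applies Markov with exponent $p'$, then Cauchy--Schwarz to split off the indicator $\mathbf 1_{\{\exists j:\boldsymbol Y_{m,j}\neq\boldsymbol{\tilde Y}_{m,j}\}}$, which leads to a $2p'$-th moment of $\boldsymbol Y_{m,j}$ and the threshold $C^*\ge 2(1-\alpha_1)(p'-\tfrac12)/\beta$. You instead bound the maximum by the full sum, apply H\"older termwise with exponents $p'$ and $p'/(p'-1)$, and use Markov at the first moment. This only requires the $p'$-th moment supplied by Lemma~\ref{lem:Y} (so it works for every $2<p'<p$ without the implicit restriction $2p'<p$ that a literal reading of the paper's Cauchy--Schwarz step would impose), and it yields the milder constraint $C^*>(1-\alpha_1)/\beta$. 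Both routes are short; yours is marginally more economical.
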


\begin{lemma}\label{lem:smallblock}
Under the condition of Theorem \ref{thm-asip}, for any $n\in[2^m+1,2^{m+1}]$, we have
\begin{eqnarray*}
\max_{2^m+1\le i\le n}\big\|\sum_{\ell\in\mathcal{J}(m)\cap[0,i]} \boldsymbol X_\ell\big\|=o(2^{\frac12(1-\alpha_1)m}\log 2^m),\quad\text{a.s.}.
\end{eqnarray*}
Moreover, for any  i.i.d.  centered Gaussian random vectors $\eta_\ell$,
\begin{eqnarray*}
	\max_{2^m+1\le i\le n}\big\|\sum_{\ell\in\mathcal{J}(m)\cap[0,i]}\boldsymbol \eta_\ell\big\|=o(2^{\frac12(1-\alpha_1)m}\log 2^m),\quad\text{a.s.}.
\end{eqnarray*}
\end{lemma}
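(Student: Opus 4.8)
The plan is to reduce the maximum over $i$ to a maximum over complete small blocks plus an oscillation inside a single small block, and then to exploit the near-independence of the small blocks (consecutive ones are separated by a big block of length $m_1=2^{\lfloor\alpha_1 m\rfloor}$) to recover the square-root cancellation of independent mean-zero sums. Write $N_m=\kappa(2^{m+1})+1$ for the number of small blocks in $[2^m+1,2^{m+1}]$; since $m_1\gg m_2$ one has $N_m\asymp 2^{(1-\alpha_1)m}$, while $m_2\asymp\log 2^m\asymp m$, so the target rate $a_m:=2^{\frac12(1-\alpha_1)m}\log 2^m$ satisfies $a_m\asymp (N_m m_2)^{1/2}m^{1/2}$. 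The extra factor $m^{1/2}$ over the typical size $(N_mm_2)^{1/2}$ is exactly the safety margin that makes a Borel--Cantelli argument succeed for any exponent $p'\in(2,p)$. For $i$ lying in (or just after) the small block $J_{m,j^*}$,
\begin{align*}
\Big\|\sum_{\ell\in\mathcal J(m)\cap[0,i]}\boldsymbol X_\ell\Big\|
&\le \max_{1\le k\le N_m}\Big\|\sum_{j=1}^{k}\boldsymbol Z_{m,j}\Big\|\\
&\quad+\max_{1\le j\le N_m}\,\max_{i\in J_{m,j}}\Big\|\sum_{\ell\in J_{m,j},\,\ell\le i}\boldsymbol X_\ell\Big\|,
\end{align*}
so it suffices to bound the two terms separately. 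Note the naive bound $\sum_j\|\boldsymbol Z_{m,j}\|$ is useless here, since its mean is of order $N_m m_2^{1/2}\gg a_m$; cancellation is essential.

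For the first (block-level) term I would run Berbee's coupling \citep{Berbee1987} exactly as in Lemma \ref{lem:bigblock}: on a richer space construct independent $\tilde{\boldsymbol Z}_{m,j}\overset{\mathscr{D}}{=}\boldsymbol Z_{m,j}$ with $\PP(\boldsymbol Z_{m,j}\ne\tilde{\boldsymbol Z}_{m,j})\le\beta(m_1)$. Since $\beta(m_1)\le Ce^{-\beta 2^{\lfloor\alpha_1 m\rfloor}}$, the probability that some coupling fails is at most $N_m\beta(m_1)$, which is summable in $m$; hence a.s.\ for all large $m$ the coupling error $\sum_j\|\boldsymbol Z_{m,j}-\tilde{\boldsymbol Z}_{m,j}\|$ vanishes. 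The independent mean-zero partial sums form a martingale, so Doob's maximal inequality followed by Rosenthal's inequality in $\mathbb H$ gives
\begin{align*}
\E\max_{1\le k\le N_m}\Big\|\sum_{j=1}^{k}\tilde{\boldsymbol Z}_{m,j}\Big\|^{p'}
&\le C_{p'}\Big[\Big(\sum_j\E\|\tilde{\boldsymbol Z}_{m,j}\|^2\Big)^{p'/2}+\sum_j\E\|\tilde{\boldsymbol Z}_{m,j}\|^{p'}\Big]\\
&\le C_{p'}(N_m m_2)^{p'/2},
\end{align*}
where the last step uses Lemma \ref{lem:Y}. Markov's inequality then bounds $\PP(\max_k\|\sum_{j=1}^k\boldsymbol Z_{m,j}\|>\e a_m)$ by a constant multiple of $(N_mm_2)^{p'/2}/a_m^{p'}\asymp m^{-p'/2}$, summable precisely because $p'>2$; Borel--Cantelli yields the $o(a_m)$ bound for the first term.

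For the second (within-block) term, each inner maximum is over a genuine consecutive sum, so Lemma \ref{lem:shao} gives $\E\|\sum_{\ell\in J_{m,j},\,\ell\le i}\boldsymbol X_\ell\|^{p'}\le C m_2^{p'/2}$; this superadditive form lets a Móricz-type maximal inequality pull the maximum over $i$ inside, giving $\E M_j^{p'}\le Cm_2^{p'/2}$ for $M_j:=\max_{i\in J_{m,j}}\|\cdots\|$. A union bound $\E\max_j M_j^{p'}\le N_m Cm_2^{p'/2}$ together with Markov gives $\PP(\max_j M_j>\e a_m)\lesssim 2^{(1-\alpha_1)(1-p'/2)m}m^{-p'/2}$, which decays geometrically; Borel--Cantelli finishes the first assertion. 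The Gaussian statement uses the same two-term split, but now the $\boldsymbol\eta_\ell$ are genuinely i.i.d.\ and mean-zero, so no coupling is needed: Doob's inequality applies directly, $\E\|\sum_{\ell\in S}\boldsymbol\eta_\ell\|^{p'}\le C(|S|\tr\boldsymbol\Gamma)^{p'/2}$ with $\tr\boldsymbol\Gamma=\sum_k\lambda_k<\infty$ finite by {\bf (A2)} since $\delta_2>1$, and the identical Markov/Borel--Cantelli estimates apply.

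The main obstacle I expect is the block-level term: the triangle-inequality bound $\sum_j\|\boldsymbol Z_{m,j}\|$ overshoots the target by a factor $\asymp 2^{\frac12(1-\alpha_1)m}/\sqrt m$, so one genuinely must recover the independent-sum cancellation. Controlling this through Berbee coupling hinges on two points that must be checked carefully: that the accumulated coupling error over the $N_m$ blocks is summable — which works only because the mixing gap is the \emph{big}-block length $m_1=2^{\lfloor\alpha_1m\rfloor}$, making $\beta(m_1)$ super-exponentially small — and that the Rosenthal constant $C_{p'}$ in $\mathbb H$ is dimension-free. The within-block and Gaussian parts are comparatively routine.
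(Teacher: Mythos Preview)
Your proposal is correct and follows essentially the same route as the paper: Berbee coupling to replace the small-block sums $\boldsymbol Z_{m,j}$ by independent copies $\tilde{\boldsymbol Z}_{m,j}$, a maximal inequality for the independent partial sums (the paper uses L\'evy's inequality where you use Doob plus Rosenthal), a separate bound on the within-block oscillation via a dyadic/M\'oricz-type maximal inequality built on Lemma~\ref{lem:shao}, and Borel--Cantelli throughout. The only cosmetic difference is that you dispose of the coupling error by noting directly that $N_m\beta(m_1)$ is summable so all couplings eventually succeed, whereas the paper bounds it via the $p'$-th moment with an indicator and Cauchy--Schwarz exactly as in Lemma~\ref{lem:bigblock}; both arguments are valid.
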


\section{Proof of Theorem \ref{thm-asip}}\label{sec:main}
 
The strategy of proving Theorem \ref{thm-asip} is to decompose $\sum_{i=1}^n\boldsymbol X_i$ into two parts by the block technique where the small blocks are negligible following   Lemma \ref{lem:smallblock}. For big blocks, the projection of these random variables on $\HH_{\le d}$ are comparable with Gaussian random variables following \citet{gotze2011estimates}, while the residual on $\HH_{>d}$  can be directly estimated. The dimension $d$ will be chosen carefully with respect to $n$. In this section we first introduce following lemmas to show big blocks are comparable with Gaussian random variables and then finish the proof of Theorem \ref{thm-asip}.

\begin{lemma}\label{lem:Gamma}
	Under the conditions of Theorem \ref{thm-asip}, one has
	\begin{eqnarray}\label{e:Gammarate}
		\|\cov(\sum_{i=1}^{n}\boldsymbol  X_i)-n\boldsymbol \Gamma\|_F\le C,
	\end{eqnarray}
	where $C$ is a positive constant depends on $p$ and $\beta$. For the  diagonal elements of the matrix $\cov(\sum_{i=1}^{n} \boldsymbol X_i)$, we further have
	\begin{eqnarray}\label{e:dim}
		\E\big[\sum_{i=1}^n   X_{i,\ell} \big]^2-n\lambda_\ell= r_\ell,
	\end{eqnarray}
	where $X_{i,\ell}$ is the $\ell$-th element of $\boldsymbol X_i$ and $r_\ell$ is a square summable constant depends on $\ell$.
	
\end{lemma}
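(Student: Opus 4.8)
The plan is to reduce the whole lemma to a single summable decay estimate on the cross-covariance operators $\cov(\boldsymbol X_0,\boldsymbol X_k)$. First I would expand, using stationarity,
\[
\cov\Big(\sum_{i=1}^n\boldsymbol X_i\Big)=n\,\cov(\boldsymbol X_0)+\sum_{k=1}^{n-1}(n-k)\big(\cov(\boldsymbol X_0,\boldsymbol X_k)+\cov(\boldsymbol X_0,\boldsymbol X_k)^T\big),
\]
since the pairs $(i,j)$ with $j-i=k$ occur exactly $n-k$ times and $\cov(\boldsymbol X_i,\boldsymbol X_j)=\cov(\boldsymbol X_0,\boldsymbol X_{j-i})$. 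Writing $\Sigma_k:=\cov(\boldsymbol X_0,\boldsymbol X_k)+\cov(\boldsymbol X_0,\boldsymbol X_k)^T$ and inserting the series $\boldsymbol\Gamma=\cov(\boldsymbol X_0)+\sum_{k\ge1}\Sigma_k$ from the Remark following {\bf (A2)}, the leading term $n\,\cov(\boldsymbol X_0)$ cancels and I obtain the clean identity
\[
\cov\Big(\sum_{i=1}^n\boldsymbol X_i\Big)-n\boldsymbol\Gamma=-\sum_{k=1}^{n-1}k\,\Sigma_k-n\sum_{k=n}^{\infty}\Sigma_k,
\]
so that $\big\|\cov(\sum_{i=1}^n\boldsymbol X_i)-n\boldsymbol\Gamma\big\|_F\le\sum_{k=1}^{n-1}k\|\Sigma_k\|_F+n\sum_{k\ge n}\|\Sigma_k\|_F$.

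The key estimate, and what I expect to be the main obstacle, is an exponential bound on $\|\Sigma_k\|_F$ under only a $p$-th moment ($p>2$) rather than boundedness of $\boldsymbol X$. I would apply the second inequality of Lemma \ref{lem:Merl} to $\boldsymbol X_0,\boldsymbol X_k$: since the $\alpha$-mixing coefficient between $\sigma(\boldsymbol X_0)$ and $\sigma(\boldsymbol X_k)$ is dominated by the $\beta$-mixing coefficient $\beta(k)\le Ce^{-\beta k}$, and since stationarity gives $Q_{\|\boldsymbol X_k\|}=Q_{\|\boldsymbol X_0\|}$,
\[
\|\cov(\boldsymbol X_0,\boldsymbol X_k)\|_F\le 18\int_0^{\beta(k)}Q_{\|\boldsymbol X_0\|}(u)^2\,\dif u.
\]
Hölder's inequality with exponents $p/2$ and $p/(p-2)$, together with $\int_0^1 Q_{\|\boldsymbol X_0\|}(u)^p\,\dif u=\pi(\|\boldsymbol X\|^p)<\infty$, then yields
\[
\|\cov(\boldsymbol X_0,\boldsymbol X_k)\|_F\le 18\,\pi(\|\boldsymbol X\|^p)^{2/p}\,\beta(k)^{(p-2)/p}\le C_1 e^{-\beta_1 k},\qquad \beta_1:=\tfrac{(p-2)\beta}{p}>0,
\]
whence $\|\Sigma_k\|_F\le 2C_1e^{-\beta_1 k}$. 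Plugging this in, $\sum_{k\ge1}k e^{-\beta_1 k}<\infty$ is a fixed constant and $n\sum_{k\ge n}e^{-\beta_1 k}\asymp n e^{-\beta_1 n}$ stays bounded uniformly in $n$; adding the two bounds gives \eqref{e:Gammarate} with a constant depending only on $p$ and $\beta$ (through $\pi(\|\boldsymbol X\|^p)$). Verifying that this last tail term remains bounded over all $n$, and not merely convergent, is the one routine point needing a little care.

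Finally, \eqref{e:dim} should follow from \eqref{e:Gammarate} almost for free once I use that the basis $\{\boldsymbol e_k\}$ diagonalises $\boldsymbol\Gamma$, so that the $\ell$-th diagonal entry of $n\boldsymbol\Gamma$ equals $n\lambda_\ell$ while $\E[\sum_{i=1}^n X_{i,\ell}]^2=\langle\cov(\sum_{i=1}^n\boldsymbol X_i)\boldsymbol e_\ell,\boldsymbol e_\ell\rangle$. Setting $r_\ell:=\E[\sum_{i=1}^n X_{i,\ell}]^2-n\lambda_\ell$, which is precisely the $\ell$-th diagonal entry of $\cov(\sum_{i=1}^n\boldsymbol X_i)-n\boldsymbol\Gamma$, I would bound the diagonal contribution by the full Frobenius norm, obtaining $\sum_{\ell\ge1}r_\ell^2\le\|\cov(\sum_{i=1}^n\boldsymbol X_i)-n\boldsymbol\Gamma\|_F^2\le C^2$, which is the claimed square summability (uniform in $n$). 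Moreover the explicit form $r_\ell=-\sum_{k=1}^{n-1}k[\Sigma_k]_{\ell\ell}-n\sum_{k\ge n}[\Sigma_k]_{\ell\ell}$ coming from the identity above shows $r_\ell$ converges as $n\to\infty$ to $-\sum_{k\ge1}k[\Sigma_k]_{\ell\ell}$, should an $\ell$-dependent constant independent of $n$ be preferred.
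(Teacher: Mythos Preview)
Your proposal is correct and follows essentially the same route as the paper: expand $\cov(\sum_{i=1}^n\boldsymbol X_i)$ by stationarity, subtract the series for $\boldsymbol\Gamma$, bound $\|\cov(\boldsymbol X_0,\boldsymbol X_k)\|_F$ via Lemma~\ref{lem:Merl} together with H\"older to obtain exponential decay in $k$, and then read off the square-summability of the diagonal remainders from the Frobenius bound. The only differences are cosmetic (your index range $k\le n-1$ versus the paper's $k\le n$, and your extra remark on the $n\to\infty$ limit of $r_\ell$).
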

\begin{proof}
	A straight calculation yields that
	\begin{eqnarray*}
		\cov(\sum_{i=1}^{n} \boldsymbol X_i)=\sum_{i=1}^n \cov(\boldsymbol X_i)+\sum_{1\le i<j\le n} \big(\cov (\boldsymbol  X_i,\boldsymbol  X_j)+\cov (\boldsymbol  X_i, \boldsymbol X_j)^T\big).
	\end{eqnarray*}
	Since $(\boldsymbol  X_i)_{0\le i\le n}$ is stationary, there exist linear operators $\boldsymbol \Gamma_i$ for $i=0,1,2,...$ such that
	\begin{eqnarray*}
		\cov(\boldsymbol  X_i)=\boldsymbol \Gamma_0 \quad \cov (\boldsymbol  X_i,\boldsymbol  X_j)=\E[ \boldsymbol X_i  \boldsymbol X_j^T]=\E[ \boldsymbol X_0  \boldsymbol X_{j-i}^T]=\boldsymbol \Gamma_{j-i}.
	\end{eqnarray*}
	We further denote $\boldsymbol \Gamma=\boldsymbol \Gamma_0+\sum_{k=1}^{\infty}\big(\boldsymbol \Gamma_k+\boldsymbol \Gamma_k^T\big)$. Thus,
	\begin{eqnarray*}
		&&\big\|\cov(\sum_{i=1}^{n}\boldsymbol  X_i)-n\boldsymbol \Gamma\big\|_F\\
		&=&\big\| \sum_{i=1}^n \cov(\boldsymbol X_i)+\sum_{1\le i<j\le n} \big(\cov (\boldsymbol X_i,\boldsymbol  X_j)+\cov (\boldsymbol  X_i,\boldsymbol  X_j)^T\big)- n\boldsymbol \Gamma_0-n\sum_{k=1}^{\infty}\big(\boldsymbol \Gamma_k+\boldsymbol \Gamma_k^T\big)  \big\|_F\\
		&=&\big\|\sum_{k=1}^n (n-k)(\boldsymbol \Gamma_k+\boldsymbol \Gamma_k^T)-n\sum_{k=1}^{\infty}\big(\boldsymbol \Gamma_k+\boldsymbol \Gamma_k^T \big)    \big\|_F\\
		&\le&\big\|n\sum_{k=n+1}^\infty (\boldsymbol \Gamma_k+\boldsymbol \Gamma_k^T)\big\|+\big\|\sum_{k=1}^{n}k\big(\boldsymbol \Gamma_k+\boldsymbol \Gamma_k^T \big)    \big\|_F.
	\end{eqnarray*}
	To finish the proof, we show that $\|\cov (\boldsymbol X_0, \boldsymbol X_k)\|_F=\|\boldsymbol \Gamma_{k}\|_F\le C e^{-ck}$ and this gives the bound of \eqref{e:Gammarate}. Following Lemma \ref{lem:Merl} and H\"{o}lder's inequality, one has
	\begin{eqnarray*}
		\|\cov ( \boldsymbol X_0,  \boldsymbol X_k)\|_F&\le& 18 \int_0^{\alpha(k)}Q_{\|  \boldsymbol X_0\|}^2(u)\dif u\\
		&\le& 18\big(\int_0^{1}1_{\{u\le\alpha(k)\}}\dif u\big)^{1-2/p} (\int_0^1 Q_{\|  \boldsymbol X_0\|}^p(u)\dif u)^{2/p} \\
		&\le& Ce^{-k\beta (1-2/p)} (\E\| \boldsymbol X\|^p)^{2/p}.
	\end{eqnarray*}
Thus, we obtain
\begin{eqnarray*}
\big\|\cov(\sum_{i=1}^{n}  \boldsymbol X_i)-n\boldsymbol \Gamma\big\|_F\le  C(\E\|  \boldsymbol X\|^p)^{2/p}.
\end{eqnarray*}

Notice that $\boldsymbol\Gamma$ is diagonal with element $\lambda_1,\lambda_2,...$ and with the form $\boldsymbol\Gamma=\boldsymbol\Gamma_0+\sum_{k=1}^\infty(\boldsymbol\Gamma_k+\boldsymbol\Gamma_k^T)$, thus for each element of $\boldsymbol\Gamma$, we have for the $\ell-$th element in the diagonal
\begin{eqnarray}
\E [  X_{0,\ell}^2]+ 2\sum_{k=1}^\infty\E[ X_{0,\ell}  X_{k,\ell}]=\lambda_\ell, \label{e:diag}
\end{eqnarray}
and for the remaining elements with $i\neq j$ and $i,j=1,2,...$,
\begin{eqnarray}
\E[  X_{0,i} X_{0,j}]+\sum_{k=1}^{\infty}\big(\E[  X_{0,i}  X_{k,j}]+\E[  X_{0,j}  X_{k,i}]\big)  =0.   \label{e:exceptdiag}
\end{eqnarray}
Following \eqref{e:Gammarate} and the definition of Frobenius norm, it is easy to see that
\begin{eqnarray*}
	\sum_{\ell=1}^{\infty} \Big(\E\big[\sum_{i=1}^n   X_{i,\ell}\big]^2-n\lambda_\ell\Big)^2 \le C,
\end{eqnarray*}
which implies there exists a square summable number $r_\ell$ such that
\begin{eqnarray*}
	\E\big[\sum_{i=1}^n   X_{i,\ell}\big]^2-n\lambda_\ell= r_\ell.
\end{eqnarray*}
\end{proof}
For any vectors $\boldsymbol \xi_0,...,\boldsymbol \xi_n$ defined on $\HH$, let
$$
\boldsymbol \xi_i^{(d)}=(\xi_{i,1},...,\xi_{i,d})^T\in\R^d,
\quad
\boldsymbol \xi_i^{[d]}=(0,...,0,\xi_{i,d+1},\xi_{i,d+2},...)^T\in \HH.
$$

\begin{lemma}\label{lem:eigenvalue}
Under the conditions of Theorem \ref{thm-asip} and $m_1\lambda_k>Ck+c$ for $k\in\N$ and constants $C$ and $c$, let $\lambda_{\max}$ and $\lambda_{\min}$ be the largest and smallest eigenvalue of $\cov(\sum_{i=1}^{m_1}\boldsymbol X_i^{(d)})$ respectively, then one has
one has
\begin{eqnarray*}
	\lambda_{\max}\le C m_1\lambda_1\quad\lambda_{\min} \ge c{m_1}\lambda_d.
\end{eqnarray*}
\end{lemma}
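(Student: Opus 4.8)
The plan is to view $\cov(\sum_{i=1}^{m_1}\boldsymbol X_i^{(d)})$ as the leading $d\times d$ principal block of the full covariance operator $\cov(\sum_{i=1}^{m_1}\boldsymbol X_i)$ and to treat it as a bounded perturbation of a diagonal matrix. Writing $\Sigma_d:=\cov(\sum_{i=1}^{m_1}\boldsymbol X_i^{(d)})$, its $(\ell,\ell')$ entry equals $\E\big[(\sum_{i=1}^{m_1}X_{i,\ell})(\sum_{i=1}^{m_1}X_{i,\ell'})\big]$, which is precisely the $(\ell,\ell')$ entry of $\cov(\sum_{i=1}^{m_1}\boldsymbol X_i)$ for $\ell,\ell'\le d$. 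Since $\boldsymbol\Gamma$ is diagonal with entries $\lambda_1,\lambda_2,\dots$ (as established in the proof of Lemma \ref{lem:Gamma}), the corresponding block of $m_1\boldsymbol\Gamma$ is $D_d:=m_1\,\mathrm{diag}(\lambda_1,\dots,\lambda_d)$.

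First I would invoke Lemma \ref{lem:Gamma}. Because the Frobenius norm of a submatrix is dominated by the Frobenius norm of the whole operator, and the operator norm is dominated by the Frobenius norm,
\[
\|\Sigma_d-D_d\|\le\|\Sigma_d-D_d\|_F\le\big\|\cov(\textstyle\sum_{i=1}^{m_1}\boldsymbol X_i)-m_1\boldsymbol\Gamma\big\|_F\le C,
\]
where $\|\cdot\|$ is the operator norm and $C$ is the constant in \eqref{e:Gammarate}. Since $\Sigma_d$ and $D_d$ are symmetric, Weyl's perturbation inequality then gives $|\lambda_{\max}(\Sigma_d)-\lambda_{\max}(D_d)|\le C$ and $|\lambda_{\min}(\Sigma_d)-\lambda_{\min}(D_d)|\le C$. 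As $D_d$ is diagonal with the ordering $\lambda_1\ge\cdots\ge\lambda_d$, we have $\lambda_{\max}(D_d)=m_1\lambda_1$ and $\lambda_{\min}(D_d)=m_1\lambda_d$, whence $\lambda_{\max}\le m_1\lambda_1+C$ and $\lambda_{\min}\ge m_1\lambda_d-C$.

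Finally I would absorb the additive constant into the leading term using the hypothesis $m_1\lambda_k>Ck+c$. For the upper bound, this hypothesis at $k=1$ forces $m_1\lambda_1$ to exceed any fixed constant for large $m$, so $m_1\lambda_1+C\le 2m_1\lambda_1$ and hence $\lambda_{\max}\le Cm_1\lambda_1$. For the lower bound, the hypothesis at $k=d$ gives $m_1\lambda_d>Cd+c\ge C+c$; choosing the constants in the hypothesis at least twice the constant from \eqref{e:Gammarate} yields $m_1\lambda_d\ge 2C$, so $m_1\lambda_d-C\ge\tfrac12 m_1\lambda_d$, giving $\lambda_{\min}\ge c\,m_1\lambda_d$ with $c=\tfrac12$.

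The only genuinely delicate point is the lower bound on $\lambda_{\min}$: the eigenvalues $\lambda_d$ decay (polynomially in $d$ under \textbf{(A2)}) while the perturbation is merely a fixed additive constant, so one must guarantee that $m_1\lambda_d$ still dominates that constant. This is exactly the role of the assumption $m_1\lambda_k>Ck+c$, which quantitatively ties together the big-block length $m_1$, the truncation dimension $d$, and the spectral decay; everything else is routine perturbation theory.
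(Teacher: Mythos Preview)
Your argument is correct, but it proceeds differently from the paper. The paper also identifies $\Sigma_d$ as the leading $d\times d$ block of $\cov(\sum_{i=1}^{m_1}\boldsymbol X_i)$, but instead of invoking the Frobenius bound \eqref{e:Gammarate} and Weyl's inequality, it applies Gershgorin-type row bounds (from \citet{garren1968bounds}) and then estimates each off-diagonal entry individually via \eqref{e:exceptdiag} and Lemma~\ref{lem:Merl}. That route yields the coarser perturbation $\lambda_{\max}\le m_1\lambda_1+Cd+c$ and $\lambda_{\min}\ge m_1\lambda_d-Cd+c$, where the error grows linearly in $d$; this is precisely why the hypothesis is stated as $m_1\lambda_k>Ck+c$ rather than merely $m_1\lambda_d$ exceeding a fixed constant. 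Your approach is tighter: because \eqref{e:Gammarate} already packages all the entrywise estimates into a single $O(1)$ Frobenius bound, Weyl's inequality gives a dimension-free perturbation $C$, and you only need the hypothesis at $k=d$ to absorb that fixed constant. Both arguments are valid; yours is shorter and uses less of the assumption, while the paper's makes the dependence on the individual cross-covariances more explicit.
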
	
\begin{proof}
Since $ \boldsymbol X_i=((\boldsymbol X_i^{(d)})^T,0,...)^T+ \boldsymbol X_i^{[d]}$, a straight calculation implies that
$\cov(\sum_{i=1}^{m_1} \boldsymbol X_i^{(d)})$ is the upper left $d\times d$ block of of $\cov(\sum_{i=1}^{m_1}  \boldsymbol X_i)$. Following \citet[Corollary II.A.3]{garren1968bounds}, we have
\begin{eqnarray*}
\lambda_{\max}&\le& \max_{k\in\{1,...d\}}\sum_{j=1}^d\big|\E\big[\sum_{i=1}^{m_1} X_{i,k}\sum_{i=1}^{m_1}X_{i,j}\big]\big|\\
\lambda_{\min}&\ge& \min_{k\in\{1,...d\}}\Big(\big|\E\big[\sum_{i=1}^{m_1} X_{i,k}\big]^2\big|-\sum_{j=1,j\neq k}^d\big|\E\big[\sum_{i=1}^{m_1} X_{i,k}\sum_{i=1}^{m_1} X_{i,j}\big]\big|\Big).
\end{eqnarray*}
Since
\begin{eqnarray*}
&&\sum_{j=1,j\neq k}^d\big|\E\big[\sum_{i=1}^{m_1} X_{i,k}\sum_{i=1}^{m_1} X_{i,j}\big]\big|\\
&=&\sum_{j=1,j\neq k}^d\big|\sum_{i=1}^{{m_1}} \E[X_{i,k} X_{i,j}]+\E[\sum_{1\le i<r\le {m_1}} X_{i,k} X_{r,j}+ X_{i,j} X_{r,k}] \big|\\
&=&\sum_{j=1,j\neq k}^d \big|{m_1}\E[ X_{0,k} X_{0,j}]+\sum_{i=1}^{m_1}({m_1}-i)\E[ X_{0,k} X_{i,j}+X_{0,j} X_{i,k}]\big|,
\end{eqnarray*}
\eqref{e:exceptdiag} and Lemma \ref{lem:Merl} yield
\begin{eqnarray*}
	&&\sum_{j=1,j\neq k}^d\big|\E\big[\sum_{i=1}^{m_1} X_{i,k}\sum_{i=1}^{m_1} X_{i,j}\big]\big|\\
	&=&\sum_{j=1,j\neq k}^d \big|\sum_{i={m_1}+1}^\infty {m_1}\E[ X_{0,k} X_{i,j}+ X_{0,j} X_{i,k}]+\sum_{i=1}^{m_1} i\E[ X_{0,k} X_{i,j}+ X_{0,j} X_{i,k}]\big|\\
	&\le& C(d-1).	
\end{eqnarray*}
Combining with \eqref{e:dim} and the fact $(r_k)_{k\ge0}^2$ is a sequence of summable constants, we obtain
\begin{eqnarray*}
	\lambda_{\max}&\le& \max_{k\in\{1,...d\}}\sum_{j=1}^d\big|\E\big[\sum_{i=1}^{m_1} X_{i,k}\sum_{i=1}^{m_1} X_{i,j}\big]\big|\\
	&\le& \max_{k\in\{1,...d\}}\big(C(d-1)+m_1\lambda_k+r_k\big)\le Cd+m_1\lambda_1+c\\
	\lambda_{\min}&\ge& \min_{k\in\{1,...d\}}\Big(\big|\E\big[\sum_{i=1}^{m_1} X_{i,k}\big]^2\big|-\sum_{j=1,j\neq k}^d\big|\E\big[\sum_{i=1}^{m_1} X_{i,k}\sum_{i=1}^{m_1} X_{i,j}\big]\big|\Big)\\
	&\ge& \min_{k\in\{1,...d\}}({m_1}\lambda_k+r_k-C(d-1)) \ge  m_1\lambda_d-Cd+c.
\end{eqnarray*}
Since ${m_1}\lambda_1\ge {m_1}\lambda_d>Cd+c$, thus
$$\lambda_{\max}\le Cm_1\lambda_1,\quad \lambda_{\min}\ge cm_1\lambda_d.
$$
\end{proof}

\begin{lemma}\label{lem:Zaitsev}
	Under the conditions of Theorem \ref{thm-asip} and  ${m_1}\lambda_k>Ck+c$ for $k\in\N$. For any $2\le p'<p$ and $\boldsymbol{\tilde Y}_{m,j}$, one can construct on a probability space $(\Omega_2,\A_2,\PP_2)$ a sequence of independent random vectors  $(\boldsymbol{\tilde Y}_{m,j})^{*}$ and the corresponding sequence of independent Gaussian random vectors $$\boldsymbol \eta_{m,j}=((\boldsymbol \eta_{m,j}^{(d)})^T,0,...)+\boldsymbol \eta_{m,j}^{[d]},$$ 
	for $1\le j\le \kappa(2^{m+1})$ so that 
	$$ \boldsymbol{\tilde Y}_{m,j}\overset{\mathscr{D}}{=}(\boldsymbol{\tilde Y}_{m,j})^*,\ \ \boldsymbol \eta_{m,j}^{[d]}\sim \mathcal N\big(\boldsymbol 0,\cov (\boldsymbol{\tilde Y}_{m,j}^{[d]})\big),\ \ \boldsymbol \eta_{m,j}^{(d)}\sim \mathcal N\big(\boldsymbol 0,\cov (\boldsymbol{\tilde Y}_{m,j}^{(d)})\big)$$
	 and
	\begin{eqnarray}\label{e:main7}
		&&\max_{1\le i\le \kappa(2^{m+1})} \|\sum_{j=1}^i(\boldsymbol{\tilde Y}_{m,j})^*-\boldsymbol \eta_{m,j}\|\\
		&=&o\Big(A_d^{\frac1{p'}} \lambda_{d}^{-\frac {1}2} 2^{((1-\alpha_1)/p'+\alpha_1/2)m} \sqrt{\log 2^{m}}+2^{m/2}d^{(1-\delta_2)/2}\log 2^m \Big)\quad \text{a.s.} .\nonumber
		\end{eqnarray}	
	 
\end{lemma}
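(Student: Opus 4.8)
The plan is to split each big-block sum into its $\R^d$-projection and its tail in $\HH_{>d}$, handle the two pieces with different tools, and recombine by the triangle inequality. Writing $\boldsymbol{\tilde Y}_{m,j}=((\boldsymbol{\tilde Y}_{m,j}^{(d)})^T,0,\dots)^T+\boldsymbol{\tilde Y}_{m,j}^{[d]}$ and requiring the Gaussian vectors $\boldsymbol\eta_{m,j}$ to carry the same block structure, we have for every $i$
\[
\Big\|\sum_{j=1}^i(\boldsymbol{\tilde Y}_{m,j})^*-\boldsymbol\eta_{m,j}\Big\|\le\Big\|\sum_{j=1}^i\big((\boldsymbol{\tilde Y}_{m,j}^{(d)})^*-\boldsymbol\eta_{m,j}^{(d)}\big)\Big\|+\Big\|\sum_{j=1}^i(\boldsymbol{\tilde Y}_{m,j}^{[d]})^*\Big\|+\Big\|\sum_{j=1}^i\boldsymbol\eta_{m,j}^{[d]}\Big\|,
\]
so it suffices to bound the finite-dimensional coupling error (producing the $A_d^{1/p'}\lambda_d^{-1/2}(\cdots)$ contribution) and the two tail sums (producing the $2^{m/2}d^{(1-\delta_2)/2}\log 2^m$ contribution) separately.

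\textbf{Step 1 (the $\R^d$ part).} Since the $\boldsymbol{\tilde Y}_{m,j}$ are independent by Lemma \ref{lem:bigblock}, so are the projections $\boldsymbol{\tilde Y}_{m,j}^{(d)}\in\R^d$, $1\le j\le\kappa(2^{m+1})$. I would apply the strong Gaussian approximation for independent random vectors of \citet[Corollary 3]{zaitsev2007estimates} to this finite family, obtaining $(\boldsymbol{\tilde Y}_{m,j}^{(d)})^*$ with the correct law and Gaussian $\boldsymbol\eta_{m,j}^{(d)}\sim\mathcal N(\boldsymbol 0,\cov(\boldsymbol{\tilde Y}_{m,j}^{(d)}))$ on a common space. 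The two quantitative inputs are the moment bound $\E\|\boldsymbol{\tilde Y}_{m,j}^{(d)}\|^{p'}\le\E\|\boldsymbol{\tilde Y}_{m,j}\|^{p'}\le Cm_1^{p'/2}$ from Lemma \ref{lem:Y}, and the nondegeneracy bound: the smallest eigenvalue $\lambda_{\min}$ of $\cov(\sum_{i=1}^{m_1}\boldsymbol X_i^{(d)})$ satisfies $\lambda_{\min}\ge cm_1\lambda_d$ by Lemma \ref{lem:eigenvalue}. Together with $\kappa(2^{m+1})\asymp 2^{(1-\alpha_1)m}$ and the block length $m_1=2^{\lfloor\alpha_1 m\rfloor}$, after the bookkeeping of the theorem's constants these produce the three factors $A_d^{1/p'}$ (the dimensional constant), $\lambda_d^{-1/2}$ (from the spectral normalisation), and $2^{((1-\alpha_1)/p'+\alpha_1/2)m}$ (from the total $p'$-th moment of the block sums). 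The maximal form and the almost-sure statement, with the $\sqrt{\log 2^m}$ slack, follow by feeding the theorem's tail estimate into a Borel--Cantelli argument over the scales $m$.

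\textbf{Step 2 (the tail part).} For the residual no sharp coupling is needed, only smallness of both partial sums. A second-moment computation on the tail coordinates (in the spirit of Lemma \ref{lem:Y} and \eqref{e:dim}), controlled by the eigenvalue tail $\sum_{k>d}\lambda_k\le C_2\sum_{k>d}k^{-\delta_2}\le Cd^{1-\delta_2}$ (valid since $\delta_2>1$), yields $\E\|\boldsymbol{\tilde Y}_{m,j}^{[d]}\|^2\le Cm_1 d^{1-\delta_2}$, and $\boldsymbol\eta_{m,j}^{[d]}\sim\mathcal N(\boldsymbol 0,\cov(\boldsymbol{\tilde Y}_{m,j}^{[d]}))$ obeys the same bound. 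Summing over the $\asymp 2^{(1-\alpha_1)m}$ independent blocks and using $\kappa(2^{m+1})m_1\asymp 2^m$ gives $\E\|\sum_j(\boldsymbol{\tilde Y}_{m,j}^{[d]})^*\|^2\le C2^m d^{1-\delta_2}$, an $L^2$-norm of order $2^{m/2}d^{(1-\delta_2)/2}$. Doob's maximal inequality bounds $\max_i\|\sum_{j\le i}\cdot\|$ by the same order, and a Chebyshev plus Borel--Cantelli argument over $m$ (the $\log 2^m$ factor comfortably covering the summability) upgrades this to the a.s. bound $o(2^{m/2}d^{(1-\delta_2)/2}\log 2^m)$; the Gaussian tails are identical. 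Combining Steps 1 and 2 through the displayed triangle inequality gives \eqref{e:main7}.

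\textbf{Main obstacle.} The delicate step is Step 1: invoking the vector-valued Gaussian approximation with the correct explicit dependence on the dimension $d$ and on the block spectrum, and verifying the hypotheses of \citet[Corollary 3]{zaitsev2007estimates} after normalising by $\cov(\sum_{i=1}^{m_1}\boldsymbol X_i^{(d)})$---this is precisely where the nondegeneracy bound $\lambda_{\min}\ge cm_1\lambda_d$ of Lemma \ref{lem:eigenvalue} is indispensable, since a near-degenerate block covariance would inflate the $\lambda_d^{-1/2}$ factor uncontrollably. A secondary point is arranging all the couplings, across every scale $m$, on one probability space $(\Omega_2,\A_2,\PP_2)$ while keeping the blocks within each scale independent, so that the Borel--Cantelli step delivers a genuine almost-sure conclusion rather than one merely in probability.
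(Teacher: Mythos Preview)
Your proposal is correct and follows essentially the same route as the paper: decompose into the $\R^d$-projection and the $\HH_{>d}$-tail, apply a Zaitsev-type strong approximation to the former (using Lemma~\ref{lem:eigenvalue} for the spectral input and Lemma~\ref{lem:Y} for the moment input) and a second-moment/maximal-inequality argument to the latter, then conclude by Markov/Chebyshev plus Borel--Cantelli over the scales $m$. The only noteworthy discrepancy is that the paper invokes \citet[Theorem~2]{gotze2011estimates}---which directly yields a $p'$-th moment bound on the running maximum with the specific constant $A_d$ appearing in the statement---rather than \citet[Corollary~3]{zaitsev2007estimates}, and for the tail it cites the maximal inequalities (25), (28), (29) of \citet{gotze2011estimates} where you invoke Doob; these are functionally equivalent choices.
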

\begin{proof}
It is easy to see that
$$
\boldsymbol{\tilde Y}_{m,j}=((\boldsymbol{\tilde Y}_{m,j}^{(d)})^T,0,0,...)^T+\boldsymbol{\tilde Y}_{m,j}^{[d]}.
$$
To compare $(\boldsymbol{\tilde Y}_{m,j})_{1\le j\le\kappa(2^{m+1})}$ with Gaussian random vectors on Hilbert space $\HH$, we show that $\boldsymbol{\tilde Y}_{m,j}^{(d)}$ are comparable with Gaussian random variables $\boldsymbol \eta_{m,j}^{(d)}$ on $\R^d$ space following  \citet[Theorem 2]{gotze2011estimates} and $\boldsymbol{\tilde Y}_{m,j}^{[d]}$ are negligible.

{\textit{Step $1$.}} We show that for $\boldsymbol{\tilde Y}_{m,j}^{(d)}$ one can construct on a probability space $(\Omega_2,\mathcal A_2,\PP_2)$ a sequence of independent random vectors $(\boldsymbol{\tilde Y}_{m,j}^{(d)})^*\overset{\mathscr{D}}{=}\boldsymbol{\tilde Y}_{m,j}^{(d)}$ and the corresponding sequence of independent Gaussian random vectors $\boldsymbol \eta_{m,j}^{(d)}\sim \mathcal N(\boldsymbol{0}, \cov(\boldsymbol{\tilde Y}_{m,j}^{(d)}))$ for $1\le j\le \kappa(2^{m+1})$ such that 
\begin{equation}\label{e:step1}
\max_{1\le i\le \kappa(2^{m+1})} \|\sum_{j=1}^i\big((\boldsymbol{\tilde Y}_{m,j}^{(d)})^*-\boldsymbol \eta_{m,j}^{(d)}\big)\| =
o\Big(A_d^{\frac1{p'}} \lambda_{d}^{-\frac {1}2} 2^{((1-\alpha_1)/p'+\alpha_1/2)m} \sqrt{\log 2^{m}}\Big),\quad \text{a.s.},
\end{equation}
where $A_d=C \max\{ d^{11p'},d^{\frac{p'(p'+2)}4}(\log d)^{\frac{p'(p'+1)}2}\}$ and $C$ is a constant depends on  $p'$.

{ For brevity, instead of writing out the properties of $(\boldsymbol{\tilde Y}_{m,j}^{(d)})^*$ and $\boldsymbol \eta_{m,j}^{(d)}$ listed above we simply say that \textit{there is a construction of $\boldsymbol{\tilde Y}_{m,j}^{(d)}$} to show that one can construct a coupling between $\boldsymbol{\tilde Y}_{m,j}^{(d)}$ and Gaussian random variables on a probability space enjoying the mentioned additional properties accordingly for $1\le j\le\kappa(2^{m+1})$.   
}

Let $\lambda_{\max,Y}$ and $\lambda_{\min,Y}$ be the maximal and minimal strictly positive eigenvalues of the covariance matrix $\cov(\boldsymbol{\tilde Y}_{m,j}^{(d)})$ respectively. According to \citet[Theorem 2]{gotze2011estimates}, there is a construction of $\boldsymbol{\tilde Y}_{m,j}^{(d)}$ on probability space $(\Omega_2,\mathcal A_2,\PP_2)$  such that
\begin{eqnarray*}
	\E\big[\max_{1\le i\le \kappa(2^{m+1})} \|\sum_{j=1}^i	\big((\boldsymbol{\tilde Y}_{m,j}^{(d)})^*-\boldsymbol \eta_{m,j}^{(d)}\big)\|^{p'}\big]
	&\le& A_d (\lambda_{\max,Y}/\lambda_{\min,Y})^{\frac {p'}2} \kappa(2^{m+1}) \E\|\boldsymbol{\tilde Y}_{m,j}\|^{p'}\\
	&\le& C A_d (\lambda_{\max,Y}/\lambda_{\min,Y})^{\frac {p'}2} \kappa(2^{m+1}) m_1^{\frac {p'}2}.
\end{eqnarray*}
Following Lemma \ref{lem:eigenvalue}, we obtain
\begin{eqnarray*}
	\E\big[\max_{1\le i\le \kappa(2^{m+1})} \|\sum_{j=1}^i\big((\boldsymbol{\tilde Y}_{m,j}^{(d)})^*-\boldsymbol \eta_{m,j}^{(d)}\big)\|^{p'}\big]
	&\le& C A_d \lambda_{d}^{-\frac {p'}2}\kappa(2^{m+1}) m_1^{\frac {p'}2}.
\end{eqnarray*}
Thus, we have
\begin{eqnarray*}
	&&\sum_{m=1}^{\infty}\PP\big(\max_{1\le i\le \kappa(2^{m+1})} \|\sum_{j=1}^i	\big((\boldsymbol{\tilde Y}_{m,j}^{(d)})^*-\boldsymbol \eta_{m,j}^{(d)}\big)\| \ge
	A_d^{\frac1{p'}} \lambda_{d}^{-\frac {1}2} 2^{((1-\alpha_1)/p'+\alpha_1/2)m} \sqrt{\log 2^{m}}
	\big)\\
	&\le& \sum_{m=1}^{\infty} \E\big[\max_{1\le i\le \kappa(2^{m+1})} \|\sum_{j=1}^i	\big((\boldsymbol{\tilde Y}_{m,j}^{(d)})^*-\boldsymbol \eta_{m,j}^{(d)}\big)\|^{p'}\big] A_d^{-1} \lambda_{d}^{\frac {p'}2} 2^{-((1-\alpha_1)+\alpha_1 p'/2)m} (\log 2^{m})^{-\frac{p'}{2}}\\
	&\le&  \sum_{m=1}^{\infty} C(\log 2^{m})^{-\frac{p'}{2}}<\infty.
\end{eqnarray*}
By the Borel-Cantelli lemma, we obtain
\begin{eqnarray*}
	\max_{1\le i\le \kappa(2^{m+1})} \|\sum_{j=1}^i	\big((\boldsymbol{\tilde Y}_{m,j}^{(d)})^*-\boldsymbol \eta_{m,j}^{(d)}\big)\| =
	o\Big(A_d^{\frac1{p'}} \lambda_{d}^{-\frac {1}2} 2^{((1-\alpha_1)/p'+\alpha_1/2)m} \sqrt{\log 2^{m}}\Big),\quad \text{a.s.}.
\end{eqnarray*}

\textit{Step $2$.} We show that there is a construction of $\boldsymbol{\tilde Y}_{m,j}^{[d]}$ on probability space $(\Omega_2,\mathcal A_2,\PP_2)$ such that  
\begin{equation}\label{e:step2}
\max_{1\le i\le \kappa(2^{m+1})} \|\sum_{j=1}^i\big((\boldsymbol{\tilde Y}_{m,j}^{[d]})^*- \boldsymbol \eta_{m,j}^{[d]}\big)\|
=o(2^{(1-\alpha_1)m/2}(1+2^{\alpha_1m}d^{1-\delta_2})^{1/2}\log 2^m)\quad \text{a.s.} .
\end{equation}

According to \citet[(25,28,29)]{gotze2011estimates}, one has
\begin{eqnarray*}
	&&\E\big[\max_{1\le i\le \kappa(2^{m+1})} \|\sum_{j=1}^i\big((\boldsymbol{\tilde Y}_{m,j}^{[d]})^*-\boldsymbol \eta_{m,j}^{[d]}\big)\|^{2}\big]\\
	&\le& C  \E\big[\max_{1\le i\le \kappa(2^{m+1})} \|\sum_{j=1}^i(\boldsymbol{\tilde Y}_{m,j}^{[d]})^*\|^{2}\big]
	+C \E\big[\max_{1\le i\le \kappa(2^{m+1})} \|\sum_{j=1}^i\boldsymbol \eta_{m,j}^{[d]}\|^{2}\big]	\\
	&\le& C\sum_{j=1}^{\kappa(2^{m+1})}\E\|\boldsymbol{\tilde Y}_{m,j}^{[d]}\|^2.
\end{eqnarray*}
By \eqref{e:dim} and the condition ${m_1}\lambda_k>Ck+c\ge r_k$ for $k\in\N$, we can get
\begin{eqnarray*}
	\E\big[\max_{1\le i\le \kappa(2^{m+1})} \|\sum_{j=1}^i\big((\boldsymbol{\tilde Y}_{m,j}^{[d]})^*-\boldsymbol \eta_{m,j}^{[d]}\big)\|^{2}\big] 
	&\le& C \kappa(2^{m+1})\sum_{k=d+1}^{\infty} \E[\sum_{i=1}^{m_1} X_{i,k}]^2\\
	&=&  C \kappa(2^{m+1})\sum_{k=d+1}^{\infty}(m_1\lambda_k+r_k)\\
	&\le& C 2^{m}d^{1-\delta_2}.
\end{eqnarray*}	
Thus, the Markov inequality yields
\begin{eqnarray*}
	&&\sum_{m=1}^{\infty}\PP\Big( \max_{1\le i\le \kappa(2^{m+1})} \|\sum_{j=1}^i\big((\boldsymbol{\tilde Y}_{m,j}^{[d]})^*-\boldsymbol \eta_{m,j}^{[d]}\big)\|\ge  2^{m/2}d^{(1-\delta_2)/2}\log 2^m \Big)\\
	&\le&\sum_{m=1}^{\infty} \E\big[\max_{1\le i\le \kappa(2^{m+1})} \|\sum_{j=1}^i\big((\boldsymbol{\tilde Y}_{m,j}^{[d]})^*-\boldsymbol \eta_{m,j}^{[d]}\big)\|^{2}\big]
	2^{-m}d^{-1+\delta_2}\log^{(-2)} 2^m\\
	&<&\infty.
\end{eqnarray*}
By the Borel-Cantelli lemma, we obtain
\begin{eqnarray*}
	\max_{1\le i\le \kappa(2^{m+1})} \|\sum_{j=1}^i\big((\boldsymbol{\tilde Y}_{m,j}^{[d]})^*-\boldsymbol \eta_{m,j}^{[d]}\big)\|
	=
	o\Big(2^{m/2}d^{(1-\delta_2)/2}\log 2^m\Big),\quad \text{a.s.}.
\end{eqnarray*}

\textit{Step $3$.} Combining the estimates in step $1$ and step $2$ above, i.e., \eqref{e:step1} and \eqref{e:step2}, one has
\begin{eqnarray*}
&&\max_{1\le i\le \kappa(2^{m+1})} \|\sum_{j=1}^i\big((\boldsymbol{\tilde Y}_{m,j})^*-\boldsymbol \eta_{m,j}\big)\|\\
&\le& \max_{1\le i\le \kappa(2^{m+1})} \|\sum_{j=1}^i\big((\boldsymbol{\tilde Y}_{m,j}^{(d)})^*-\boldsymbol \eta_{m,j}^{(d)}\big)\|+\max_{1\le i\le \kappa(2^{m+1})} \|\sum_{j=1}^i\big((\boldsymbol{\tilde Y}_{m,j}^{[d]})^*-\boldsymbol \eta_{m,j}^{[d]}\big)\|\nonumber\\
&=&o\Big(A_d^{\frac1{p'}} \lambda_{d}^{-\frac {1}2} 2^{((1-\alpha_1)/p'+\alpha_1/2)m} \sqrt{\log 2^{m}}+2^{m/2}d^{(1-\delta_2)/2}\log 2^m \Big)\quad \text{a.s.} .\nonumber
\end{eqnarray*}	
\end{proof}

\begin{proof}[Proof of Theorem \ref{thm-asip}]
For any $i\in[2^m+1,2^{m+1}]$, it is easy to see that
\begin{eqnarray*}
	\sum_{\ell=1}^i   \boldsymbol X_\ell=\sum_{j=1}^{\kappa(i)} \boldsymbol Y_{m,j}+\sum_{\ell\in I_{m,\kappa(i)+1}\cap[2^m+1,i]}  \boldsymbol X_\ell+\sum_{\ell\in\mathcal{J}(m)\cap [2^m+1,i]}  \boldsymbol X_\ell.
\end{eqnarray*}
Recall that $i_{m,j}$ is the smallest element of $I_{m,j}$, following Lemma \ref{lem:smallblock}, we can get
\begin{eqnarray}\label{e:main1}
	&&\max_{2^{m}+1\le i\le 2^{m+1}}\|\sum_{\ell=1}^i   \boldsymbol X_\ell-\sum_{j=1}^{\kappa(i)}\boldsymbol  Y_{m,j}\|\nonumber\\
	&\le&\max_{2^{m}+1\le i\le 2^{m+1}}\|\sum_{\ell\in I_{m,\kappa(i)+1}\cap[2^m+1,i]} \boldsymbol  X_\ell\|+	\max_{2^{m}+1\le i\le 2^{m+1}}\|\sum_{\ell\in\mathcal{J}(m)\cap [2^m+1,i]}\boldsymbol  X_\ell\|\nonumber\\
	&=&\max_{1\le j\le \kappa(2^{m+1})}\max_{1\le i<|I_{m,j}|}\big\|\sum_{\ell=i_{m,j}}^{i_{m,j}+i}  \boldsymbol X_\ell\big\|+o(2^{\frac12(1-\alpha_1)m}\log 2^m),\quad{\mathrm{a.s..}}
\end{eqnarray}
Let $p'$ be a positive constant such that $2< p'<p$. For the first term, \citet[Proposition 1]{Wu07} and Lemma \ref{lem:Y} imply
\begin{eqnarray*}
	\Big(\E[\max_{1\le i\le 2^r }\|\sum_{j=1}^i   \boldsymbol X_j\|^{p'}]\Big)^{\frac1{p'}}\le\sum_{i=0}^r2^{(r-i)/p'}\Big(\E\|\sum_{j=1}^{2^i}\boldsymbol  X_j\|^{p'}\Big)^{\frac1{p'}}
	\le C\sum_{i=0}^r2^{(r-i)/p'}(2^i)^{\frac12}\le C2^{\frac r2},
\end{eqnarray*}
which yields
\begin{eqnarray*}
	\E\big[\max_{1\le i< |I_{m,j}|}\|\sum_{\ell=i_{m,j}}^{i_{m,j}+i}  \boldsymbol X_\ell\|^{p'}\big]\le |I_{m,j}|^{\frac {p'}2}\le 2^{\frac {p'}2 \alpha_1m}.
\end{eqnarray*}
Thus, the Markov inequality implies
\begin{eqnarray*}
	&&\sum_{m=1}^\infty\PP\big(\max_{1\le j\le \kappa(2^{m+1})}\max_{1\le i< |I_{m,j}|}\|\sum_{\ell=i_{m,j}}^{i_{m,j}+i} \boldsymbol  X_\ell\|\ge 2^{((1-\alpha_1)/p'+\alpha_1/2)m}(\log 2^m)^{1/2}\big)\\
&\le& \sum_{m=1}^\infty\E\big[\max_{1\le j\le \kappa(2^{m+1})}\max_{1\le i< |I_{m,j}|}\|\sum_{\ell=i_{m,j}}^{i_{m,j}+i}  \boldsymbol X_\ell\|^{p'}\big]2^{-(1-\alpha_1+\alpha_1 p'/2)m}(\log 2^m)^{-p'/2}\\
&\le& \sum_{m=1}^\infty\sum_{j=1}^{\kappa(2^{m+1})}\E\big[\max_{1\le i< |I_{m,j}|}\|\sum_{\ell=i_{m,j}}^{i_{m,j}+i}  \boldsymbol X_\ell\|^{p'}\big]2^{-(1-\alpha_1+\alpha_1 p'/2)m}(\log 2^m)^{-p'/2}\\
&\le& \sum_{m=1}^\infty  2^{(1-\alpha_1+p'\alpha_1/2)m}  2^{-(1-\alpha_1+p'\alpha_1/2)m}(\log 2^m)^{-p'/2} <\infty.
\end{eqnarray*}
By the Borel-Cantelli lemma, we obtain
\begin{eqnarray}\label{e:main2}
\max_{1\le j\le\kappa(2^{m+1})}\max_{i< |I_{m,j}|}\|\sum_{\ell=i_{m,j}}^{i_{m,j}+i} \boldsymbol X_\ell\|=o(2^{((1-\alpha_1)/p'+\alpha_1/2)m}(\log 2^m)^{1/2}),\quad \text{a.s.}.
\end{eqnarray}
Similar estimate holds for any  i.i.d.  centered Gaussian random vectors $\boldsymbol \eta_\ell$, i.e.,
\begin{eqnarray}\label{e:main6}
\max_{1\le j\le\kappa(2^{m+1})}\max_{i< |I_{m,j}|}\|\sum_{\ell=i_{m,j}}^{i_{m,j}+i}\boldsymbol \eta_\ell\|=o(2^{((1-\alpha_1)/p'+\alpha_1/2)m}(\log 2^m)^{1/2}),\quad \text{a.s.}.
\end{eqnarray}
Combining \eqref{e:main1} and \eqref{e:main2}, we have
\begin{align*}
\max_{2^{m}+1\le i\le 2^{m+1}}\|\sum_{\ell=1}^i \boldsymbol X_\ell-\sum_{j=1}^{\kappa(i)} \boldsymbol Y_{m,j}\|
=o\big((2^{\frac12(1-\alpha_1)m}+2^{((1-\alpha_1)/p'+\alpha_1/2)m})\log 2^m\big),\quad\text{a.s.}.
\end{align*}
Following Lemma \ref{lem:bigblock}, on a richer probability space of $(\Omega,\A,\PP)$, one has
\begin{eqnarray}\label{e:main3}
&& \max_{2^{m}+1\le i\le 2^{m+1}}\|\sum_{\ell=1}^i  \boldsymbol X_\ell-\sum_{j=1}^{\kappa(i)} \boldsymbol{\tilde Y}_{m,j}\|\nonumber\\
&=&\max_{2^{m}+1\le i\le 2^{m+1}}\|\sum_{\ell=1}^i   \boldsymbol X_\ell-\sum_{j=1}^{\kappa(i)}\boldsymbol Y_{m,j}+\sum_{j=1}^{\kappa(i)}\boldsymbol Y_{m,j}-\sum_{j=1}^{\kappa(i)}\boldsymbol{\tilde Y}_{m,j}\|\nonumber\\
&=&o\big((2^{\frac12(1-\alpha_1)m}+2^{((1-\alpha_1)/p'+\alpha_1/2)m})\log 2^m\big),\quad\text{a.s.}.
\end{eqnarray}

Lemma \ref{lem:Zaitsev} implies that there is a construction of $\boldsymbol{\tilde Y}_{m,j}$ on probability space $(\Omega_2,\A_2,\PP_2)$ and one can compare $\boldsymbol{\tilde Y}_{m,j}$ with i.i.d. Gaussian random variables $\boldsymbol \eta_{m,j}$. Now we regularize $\boldsymbol \eta_{m,j}$, that is, replacing the covariance matrix $\cov(\boldsymbol \eta_{m,j})$  by the linear form $|\boldsymbol I_{m,j}|\boldsymbol\Gamma$. We denote 
$\boldsymbol{\tilde\eta}_{m,j}:=\mathcal N(\boldsymbol 0,|\boldsymbol I_{m,j}|\boldsymbol\Gamma)$.

Following Lemma \ref{lem:Gamma} and for large enough $m$, one has
\begin{eqnarray}\label{e:normalsum}
	\cov(\boldsymbol \eta_{m,j})+\boldsymbol M_{m,j}=|\boldsymbol I_{m,j}|\boldsymbol\Gamma+\boldsymbol N_{m,j},
\end{eqnarray}
where $\boldsymbol M_{m,j}$ and $\boldsymbol N_{m,j}$ are  positive definite linear operators with $\|\boldsymbol M_{m,j}\|_F \le C$, $\|\boldsymbol N_{m,j}\|_F\le C$. Therefor, $\mathcal N(\boldsymbol 0,|\boldsymbol I_{m,j}|\boldsymbol\Gamma+\boldsymbol N_{m,j})$ is the sum of  $\boldsymbol{\tilde\eta}_{m,j}$ and an independent random variable $\mathcal N(\boldsymbol 0,\boldsymbol N_{m,j})$.
On the other hand, \eqref{e:normalsum} implies $\mathcal N(\boldsymbol 0,|\boldsymbol I_{m,j}|\boldsymbol\Gamma+\boldsymbol N_{m,j})$ is also the sum of  $\boldsymbol \eta_{m,j}$ and an independent random variable $\mathcal N(\boldsymbol 0,\boldsymbol M_{m,j})$. Using \citet[Lemma A1]{berkes1979approximation}, we obtain a coupling between $\boldsymbol \eta_{m,j}$ and $\boldsymbol {\tilde\eta}_{m,j}$ such that the difference $\boldsymbol D_{m,j}= \boldsymbol \eta_{m,j}-\boldsymbol{\tilde\eta}_{m,j}$ is centered and $\E\|\boldsymbol D_{m,j}\|^2\le C$.

Thus, following the L\'{e}vy inequality, see \citet[5.4.a]{lin2011probability}, we have
\begin{eqnarray*}
	&&\PP\big(\max_{2^{m}+1\le i\le 2^{m+1}}\|\sum_{j=1}^{\kappa(i)} \boldsymbol D_{m,j}\|> 2^{\frac12(1-\alpha_1)m}{\log 2^m}\big)\\
	&\le& 2\PP\big(\|\sum_{j=1}^{\kappa(2^{m+1})} \boldsymbol D_{m,j}\|> 2^{\frac12(1-\alpha_1)m}{\log 2^m}\big)\\
	&\le& 2\E\|\sum_{j=1}^{\kappa(2^{m+1})} \boldsymbol D_{m,j}\|^2  2^{-(1-\alpha_1 )m}(\log 2^m)^{-2}\\
	&\le& C\kappa(2^{m+1}) 2^{-(1-\alpha_1 )m}(\log 2^m)^{-2},
\end{eqnarray*}
which is summable with respect to $m$. Then there is a construction for $\boldsymbol \eta_{m,j}$ on probability space $(\Omega_3,\A_3,\PP_3)$ such that
\begin{eqnarray}\label{e:main4}
\max_{2^{m}+1\le i\le 2^{m+1}}\|\sum_{j=1}^{\kappa(i)}(\boldsymbol \eta_{m,j}^*-\boldsymbol {\tilde\eta}_{m,j}^*)\|=o(2^{\frac12(1-\alpha_1)m}\log 2^m ),\quad\text{a.s..}
\end{eqnarray}
Combining \eqref{e:main4} with \eqref{e:main7} and \citet[Lemma 4.1]{Berkes14}, there is a construction for $\boldsymbol{\tilde Y}_{m,j}^*$ on probability space $(\Omega_4,\A_4,\PP_4)$ such that
\begin{align}\label{e:main5}
&\max_{2^{m}+1\le i\le 2^{m+1}}\|\sum_{j=1}^{\kappa(i)} (\boldsymbol {\tilde  Y}_{m,j}^{**}-\boldsymbol{\tilde \eta}_{m,j}^{**})\|\\
=&o\Big(\big(A_d^{\frac1{p'}} \lambda_{d}^{-\frac {1}2} 2^{(\frac{1-\alpha_1}{p'}+\frac{\alpha_1}2)m} +2^{m/2}d^{(1-\delta_2)/2}+2^{(1-\alpha_1)m/2}\big) \log 2^m
\Big)\quad \text{a.s.} .\nonumber
\end{align}
Using \citet[Lemma 4.1]{Berkes14} again with \eqref{e:main6}, \eqref{e:main3}, \eqref{e:main5} and Lemma \ref{lem:smallblock}, we can finally construct a probability space $(\Omega_{(m)},\A_{(m)},\PP_{(m)})$ on which we can define $\boldsymbol X^*_\ell$ distributed as $\boldsymbol X_\ell$ and i.i.d. Gaussian random variables $\boldsymbol \eta_\ell\sim \mathcal N(\boldsymbol 0,\boldsymbol\Gamma)$ such that
\begin{eqnarray*}
&&\max_{2^{m}+1\le i\le 2^{m+1}}\|\sum_{\ell=1}^i   \boldsymbol X_\ell^*-\sum_{\ell=1}^i \boldsymbol \eta_\ell\|\\
&=&
o\Big(\big(A_d^{\frac1{p'}} \lambda_{d}^{-\frac {1}2} 2^{(\frac{1-\alpha_1}{p'}+\frac{\alpha_1}2)m}  +2^{m/2}d^{(1-\delta_2)/2}+2^{(1-\alpha_1)m/2}
\big)\log 2^m \Big)\quad \text{a.s.} .\nonumber
\end{eqnarray*}
Recall the definition of $A_d$ and condition in Lemma \ref{lem:Zaitsev},
$$A_d^{\frac1{p'}}=C \max\{ d^{11},d^{\frac{p'+2}4}(\log d)^{\frac{p'+1}2}\},\quad
m_1\lambda_k> Ck+c,$$
and $C_1d^{-\delta_1}\le \lambda_d\le C_2d^{-\delta_2}$, $d=2^{m\theta_{p'}}$. We take $\alpha_1=(1+\delta_1)\theta_{p'}$ and
\begin{eqnarray*}
\theta_{p'}=\min\big\{\frac{p'-2}{22p'-2+(2p'-2)\delta_1+p'\delta_2},\frac{p'-2}{p'(p'+2)/2-2+(2p'-2)\delta_1+p'\delta_2}\big\}
\end{eqnarray*}
to ensure
\begin{eqnarray*}
A_d^{\frac1{p'}} d^{\frac {\delta_1}2} 2^{(\frac{1-\alpha_1}{p'}+\frac{\alpha_1}2)m} =2^{\frac{1-\alpha_1}2m}(1+2^{\alpha_1m}d^{1-\delta_2})^{1/2}.
\end{eqnarray*}
Then we have
\begin{eqnarray*}
	&&\max_{2^m+1\le i\le 2^{m+1}}\|\sum_{\ell=2^m+1}^{i}( \boldsymbol X^*_\ell-\boldsymbol \eta_\ell)\|=	o\big(2^{\frac12m(1-(\delta_2-1)\theta_{p'})}(\log 2^m)^{\frac{p'+3}2}\big),\quad \text{a.s..}
\end{eqnarray*}
Hence we can construct probability space $(\Omega',\A',\PP')$ on which
\begin{eqnarray*}
	\max_{1\le i\le 2^{m+1}}\|\sum_{\ell=1}^{i}( \boldsymbol X^*_\ell-\boldsymbol \eta_\ell)\|&=&	o\big(2^{\frac12m(1-(\delta_2-1)\theta_{p'})}(\log 2^m)^{\frac{p'+3}2}\big)\quad \text{a.s.}.
\end{eqnarray*}
{
For the logarithmic term, there exists a $2<p''<p'$ such that,
\begin{eqnarray*}
	\max_{1\le i\le 2^{m+1}}\|\sum_{\ell=1}^{i}( \boldsymbol X^*_\ell-\boldsymbol \eta_\ell)\|&=&	o\big(2^{\frac12m(1-(\delta_2-1)\theta_{p''})}\big)\quad \text{a.s.}.
\end{eqnarray*}
Let $p''$ close enough to $p$, we obtain 
\begin{eqnarray*}
	\max_{1\le i\le 2^{m+1}}\|\sum_{\ell=1}^{i}( \boldsymbol X^*_\ell-\boldsymbol \eta_\ell)\|&=&	o(2^{m\bar\theta})\quad \text{a.s.},
\end{eqnarray*}
where
\begin{align}\label{e:theta}
	\bar\theta>\max\big\{\frac{(2p-2)\delta_1+2\delta_2+23p-4}{44p-4+(4p-4)\delta_1+2p\delta_2},\frac{(2p-2)\delta_1+2\delta_2+p(p+4)/2-4}{p(p+2)-4+(4p-4)\delta_1+2p\delta_2}\big\},
\end{align}
and consequently
\begin{eqnarray*}
	&&\max_{1\le i\le n}\|\sum_{\ell=1}^{i}( \boldsymbol X^*_\ell-\boldsymbol \eta_\ell)\|=	o(n^{\bar\theta}),\quad \text{a.s..}
\end{eqnarray*}
}
\end{proof}

\begin{proof}[Proof of Corollary \ref{coro-asip}]
When $\delta_1=\delta_2=\delta$, it is easy to see that	
\begin{align}\label{e:theta}
\bar\theta>\max\big\{\frac{2p\delta+23p-4}{44p-4+(6p-4)\delta},\frac{2p \delta +p(p+4)/2-4}{p(p+2)-4+(6p-4)\delta }\big\},
\end{align}	
which converges to $\frac13+\frac{2}{3(3p-2)}$ as $\delta\to\infty$. That is, for any $\e>0$, as $\delta>\bar\delta_{p,\e}$ where
\begin{eqnarray}\label{e:delta}
\bar\delta_{p,\e}=\left\{
\begin{aligned}
	\frac{25p^2-54p+8-(44p-4)(3p-2)\e}{2(3p-2)^2\e},\quad p\le 42,\\
	\frac{p^3/2+3p^2-12p'+8-(p^2+2p-4)(3p-2)\e}{2(3p-2)^2\e}, \quad p> 42.
\end{aligned}
\right.
\end{eqnarray}
we obtain
\begin{align*}
\max_{1\le i\le n }\big\|\sum_{j=1}^i ( \boldsymbol X^*_j-\boldsymbol \eta_j)\big\|=o\big(n^{\frac13+\frac{2}{3(3p-2)}+\e}\big),\quad \text{a.s.}.
\end{align*}
\end{proof}


\section{Examples}\label{sec:example}
 {    In this section, we give two examples where the first compares the mixing condition with the geometric moment contraction (GMC) condition of \citet[Example 2.2]{Berkes14} on $\HH$ and the second considers the functional autoregressive processes.
}
\subsection{Markov chain}\label{ex:1}
Let $(\boldsymbol X_k)_{k \ge 0}$ be a  $\HH-$valued time homogeneous Markov chain with $p-$th moment for $p>2$ satisfying the condition:

{\bf(A3)} $(\boldsymbol X_k)_{k\ge0}$ is irreducible, aperiodic and Feller. There exists a Lyapunov function $V:{ \HH}\to [1,+\infty)$ such that
	\begin{align}\label{e:lya}
	\E[V(\boldsymbol X_{1})|\boldsymbol X_0=\boldsymbol x]\le\gamma V(\boldsymbol x)+K1_\textbf{C}(\boldsymbol x),
	\end{align}
	where $0<\gamma<1$, $K>0$ and \textbf{C} is a compact set.

It is easy to prove that $(\boldsymbol X_k)_{k\ge0}$ is exponential ergodic with invariant measure $\pi$ which yields exponential $\beta-$mixing, see \citet[Theorem 2.1]{Tweedie94} and  \citet[Proposition 1]{Davy1973}. We further assume that $\boldsymbol X_0\sim \pi$ and { $\boldsymbol\Gamma$ is a positive definite operator whose eigenvalues polynomial decay}, then $(\boldsymbol X_k)_{k\ge0}$ satisfies ASIP with rate \eqref{e:remark}. 


Comparing {\bf(A3)} with the contraction condition in \citet{Berkes14},
\begin{equation}\label{e:GMC}
\E\|\boldsymbol X_1^{\boldsymbol x}-\boldsymbol X_1^{\boldsymbol y}\|^p\le r^p\|\boldsymbol x-\boldsymbol y\|^p,
\end{equation} 
where $0<r<1$, $\boldsymbol X_1^{\boldsymbol x}$ denotes the Markov chain with initial value $\boldsymbol X_0=\boldsymbol x$. The term $K 1_\textbf{C}(\boldsymbol x)$ of {\bf(A3)} makes it be a weaker condition than the contraction condition.

\subsection{Functional autoregressive processes}\label{ex:3}
We consider the functional autoregressive processes
\begin{eqnarray}\label{e:AR1}
\boldsymbol X_{k+1}=\boldsymbol \mu+\boldsymbol A\boldsymbol X_{k}+\boldsymbol B \boldsymbol\e_{k+1},
\end{eqnarray}
where $\boldsymbol A$ and $\boldsymbol B$ are linear operators from $\mathbb H$ to $\mathbb H$ with kernel $  \boldsymbol a(\cdot,\cdot)$ and $  \boldsymbol b(\cdot,\cdot)$ respectively, $\boldsymbol \e_{k+1}$ is white noise. Let $(\boldsymbol e_k)_{k\in\N}$ be an orthonormal basis of $L^2([0,\pi])$ with the form $\boldsymbol e_k(x)=\sqrt{\frac{2}{\pi}}\sin (kx)$ and 
$$\boldsymbol A\boldsymbol e_k(x)=\lambda_k \boldsymbol e_k(x),
$$
where $\boldsymbol A\boldsymbol f(x)=\int_0^\pi \boldsymbol a(s,x)\boldsymbol f(s)\dif s$. According to Karhunen-Lo\`eve decomposition, one has
$$
 \boldsymbol a(s,t)=\sum_{k=1}^\infty \lambda_k  \boldsymbol e_k(s) \boldsymbol e_k(t).
$$
\eqref{e:AR1} can be written as
\begin{eqnarray}\label{e:AR1(1)}
 \boldsymbol X_{k+1}(\cdot)= \boldsymbol\mu(\cdot)+\int_{0}^{\pi} \boldsymbol a(\cdot,s) \boldsymbol X_{k}(s)\dif s+\int_{0}^{\pi} \boldsymbol b(\cdot,s)  \boldsymbol \e_{k+1}(s)\dif s.
\end{eqnarray}
We refer the reader to \citet{bosq2000linear,wang2020functional} for more details of functional autoregressive processes. We assume $ \boldsymbol\mu= \boldsymbol0$,  $ \boldsymbol B^2= \boldsymbol A$ and $ \boldsymbol A$ is symmetric for the simplify of calculation. Further assuming that { $0<\lambda_k \asymp k^{-\delta}<1$}, conditions {\bf (A1)} and {\bf (A2)} are satisfied and $( \boldsymbol X_k)_{k\in \N_0}$ satisfies the ASIP.

\begin{appendix}
\section{The proof of Lemma \ref{lem:shao} }
The proof of Lemma \ref{lem:shao} following the properties of $\alpha-$mixing sequence, see \cite{Bradley2005} for more details. Denote the $\alpha-$mixing coefficients by
\begin{eqnarray*}
\alpha(n)=\sup_{k\ge1}\big\{|\PP(A\cap B)-\PP(A)\PP(B)|:A\in\sigma(X_i,1\le i\le k),B\in\sigma(X_i,i\ge k+n)\big\}.
\end{eqnarray*}
Since $\alpha(n)\le\beta(n)$, assumption {\bf (A1)} implies
\begin{eqnarray}\label{e:alpha}
	\alpha(n)\le C e^{-\beta n}.
\end{eqnarray}
We first give  following preparing lemma.
\begin{lemma}\label{lem:shaoBDG}
Let $( \boldsymbol\theta_i)_{1\le i\le n}$ be a sequence of random variables on $\HH$ with finite $p-$moment and let $\F_i=\sigma(\boldsymbol\theta_j, j\le i)$. Then for any $p\ge2$, there exists constant $C_p$ such that
\begin{eqnarray*}
\E\big\|\sum_{i=1}^n \boldsymbol\theta_i\big\|^p&\le& C_p\Big(\big(\sum_{i=1}^n\E\|\boldsymbol\theta_i\|^2\big)^{\frac p2}+\sum_{i=1}^n\E\|\boldsymbol\theta _i\|^p+n^{p-1}\sum_{i=1}^n\E\|\E[\boldsymbol\theta_i|\mathcal{F}_{i-1}]\|^p\\
&&+n^{\frac p2-1}\sum_{i=1}^n\E\big|\E[\|\boldsymbol\theta_i\|^2|\mathcal{F}_{i-1}]-\E\|\boldsymbol\theta_i\|^2\big|^{\frac p2}
\Big).
\end{eqnarray*}
\end{lemma}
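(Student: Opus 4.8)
The plan is to reduce the inequality to a Burkholder--Rosenthal moment bound for Hilbert-space-valued martingales via the standard conditional-centering decomposition, and then to translate the resulting bound (which naturally lives on the martingale differences) back into the quantities involving $\boldsymbol\theta_i$. This is the infinite-dimensional analogue of the scalar argument behind \citet[Theorem 4.1]{Shao96}, with $|\cdot|$ replaced by $\|\cdot\|$ and the scalar martingale inequalities replaced by their Hilbert-space counterparts.

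First I would set $\boldsymbol g_i=\E[\boldsymbol\theta_i\mid\F_{i-1}]$ and $\boldsymbol D_i=\boldsymbol\theta_i-\boldsymbol g_i$, so that $(\boldsymbol D_i)$ are martingale differences for $(\F_i)$ and $\sum_{i=1}^n\boldsymbol\theta_i=\sum_{i=1}^n\boldsymbol D_i+\sum_{i=1}^n\boldsymbol g_i$. The convexity inequality $\|a+b\|^p\le 2^{p-1}(\|a\|^p+\|b\|^p)$ splits the target into a predictable part and a martingale part. The predictable part is immediate: by the power-mean inequality $\E\|\sum_{i=1}^n\boldsymbol g_i\|^p\le n^{p-1}\sum_{i=1}^n\E\|\boldsymbol g_i\|^p$, and since $\|\boldsymbol g_i\|\le\E[\|\boldsymbol\theta_i\|\mid\F_{i-1}]$ gives $\E\|\boldsymbol g_i\|^p\le\E\|\boldsymbol\theta_i\|^p$ by conditional Jensen, this is controlled by the third term on the right-hand side.

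The crux is the martingale part, where I would invoke the conditional Burkholder--Rosenthal inequality valid in any Hilbert space (which is $2$-smooth): for $p\ge2$,
\[
\E\Big\|\sum_{i=1}^n\boldsymbol D_i\Big\|^p\le C_p\Big(\E\Big(\sum_{i=1}^n\E[\|\boldsymbol D_i\|^2\mid\F_{i-1}]\Big)^{p/2}+\sum_{i=1}^n\E\|\boldsymbol D_i\|^p\Big).
\]
One can either cite the martingale bounds of Pinelis in $2$-smooth spaces for this, or derive it from the Hilbert-space Burkholder--Davis--Gundy bound $\E\|\sum\boldsymbol D_i\|^p\le C_p\,\E(\sum\|\boldsymbol D_i\|^2)^{p/2}$ followed by the decomposition $\sum\|\boldsymbol D_i\|^2=\sum\E[\|\boldsymbol D_i\|^2\mid\F_{i-1}]+\sum(\|\boldsymbol D_i\|^2-\E[\|\boldsymbol D_i\|^2\mid\F_{i-1}])$, estimating the second (scalar, martingale) sum by a von Bahr--Esseen-type inequality when $2\le p<4$ and by Burkholder--Davis--Gundy again when $p\ge4$. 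The second summand on the right already matches the second term once I note $\E\|\boldsymbol D_i\|^p\le C_p\E\|\boldsymbol\theta_i\|^p$.

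It remains to translate the conditional quadratic variation. Using $\E\|\boldsymbol D_i\|^2\le\E\|\boldsymbol\theta_i\|^2$ and the identity $\E[\|\boldsymbol D_i\|^2\mid\F_{i-1}]=\E[\|\boldsymbol\theta_i\|^2\mid\F_{i-1}]-\|\boldsymbol g_i\|^2$, I split $\sum\E[\|\boldsymbol D_i\|^2\mid\F_{i-1}]=\sum\E\|\boldsymbol D_i\|^2+\sum\big(\E[\|\boldsymbol D_i\|^2\mid\F_{i-1}]-\E\|\boldsymbol D_i\|^2\big)$; the first piece raised to the power $p/2$ gives the first term, and to the second piece I apply $(\sum a_i)^{p/2}\le n^{p/2-1}\sum a_i^{p/2}$. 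The fluctuation of $\boldsymbol D_i$ equals the fluctuation of $\boldsymbol\theta_i$ minus $(\|\boldsymbol g_i\|^2-\E\|\boldsymbol g_i\|^2)$; the former yields the fourth term, while the latter contributes $n^{p/2-1}\sum\E\|\boldsymbol g_i\|^p\le C_p n^{p-1}\sum\E\|\boldsymbol\theta_i\|^p$, absorbed into the third term since $n^{p/2-1}\le n^{p-1}$. The main obstacle is the martingale moment inequality in infinite dimensions: one must rely on the $2$-smoothness of $\HH$ so that the Burkholder/Rosenthal constants are dimension-free, and then carefully carry out the bookkeeping that converts the conditional variance of $\boldsymbol D_i$ into that of $\boldsymbol\theta_i$ without producing uncontrolled cross terms; everything else is routine convexity and conditional-Jensen estimates.
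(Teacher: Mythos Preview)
Your proposal is correct and follows essentially the same route as the paper: conditional-centering decomposition, power-mean on the predictable part, and the Pinelis Burkholder--Rosenthal inequality for the martingale part, followed by splitting the conditional quadratic variation into its mean and fluctuation. The only difference is cosmetic: the paper bounds $\E[\|\boldsymbol D_i\|^2\mid\F_{i-1}]\le\E[\|\boldsymbol\theta_i\|^2\mid\F_{i-1}]$ directly and then splits $\E[\|\boldsymbol\theta_i\|^2\mid\F_{i-1}]=(\E[\|\boldsymbol\theta_i\|^2\mid\F_{i-1}]-\E\|\boldsymbol\theta_i\|^2)+\E\|\boldsymbol\theta_i\|^2$, which avoids your detour through the identity $\E[\|\boldsymbol D_i\|^2\mid\F_{i-1}]=\E[\|\boldsymbol\theta_i\|^2\mid\F_{i-1}]-\|\boldsymbol g_i\|^2$ and the subsequent handling of the $\|\boldsymbol g_i\|^2-\E\|\boldsymbol g_i\|^2$ term.
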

\begin{proof}
The strategy is to construct martingale differences $\boldsymbol\theta_i-\E[\boldsymbol\theta_i|\mathcal{F}_{i-1}]$ and using the Burkholder inequality to get the result. That is,
\begin{eqnarray*}
\E\big\|\sum_{i=1}^n \boldsymbol\theta_i\big\|^p &=& \E\big\|\sum_{i=1}^n(\boldsymbol\theta_i-\E[\boldsymbol\theta_i|\mathcal{F}_{i-1}]+\E[\theta_i|\mathcal{F}_{i-1}])\big\|^p\\
&\le& 2^p\Big(\E\big\|\sum_{i=1}^n(\boldsymbol\theta_i-\E[\boldsymbol\theta_i|\mathcal{F}_{i-1}])\big\|^p+n^{p-1}\sum_{i=1}^n\E\|\E[\boldsymbol\theta_i|\mathcal{F}_{i-1}]\|^p\Big).
\end{eqnarray*}
For the first term, \citet[Theorem 4.1]{Pinelis94} implies
\begin{eqnarray*}
&&\E\big\|\sum_{i=1}^n(\boldsymbol\theta_i-\E[\boldsymbol\theta_i|\mathcal{F}_{i-1}])\big\|^p\\
&\le&
C_p \Big(\sum_{i=1}^n\E\big\|\boldsymbol\theta_i-\E[\boldsymbol\theta_i|\mathcal{F}_{i-1}]\big\|^p+\E\Big[\sum_{i=1}^n\E\big[\|\boldsymbol\theta_i-\E[\boldsymbol\theta\boldsymbol\theta_i|\mathcal{F}_{i-1}]\|^2|\mathcal{F}_{i-1}\big]\Big]^{\frac p2}\Big)\\
&\le&C_p \Big(\sum_{i=1}^n\E\big\|\boldsymbol\theta_i-\E[\boldsymbol\theta_i|\mathcal{F}_{i-1}]\big\|^p+
\E\Big[\sum_{i=1}^n\E[\|\boldsymbol\theta_i\|^2|\mathcal{F}_{i-1}]\Big]^{\frac p2}\Big)\\
&=&C_p \Big(\sum_{i=1}^n\E\big\|\boldsymbol\theta_i-\E[\boldsymbol\theta_i|\mathcal{F}_{i-1}]\big\|^p+
\E\Big[\sum_{i=1}^n2\E[\|\boldsymbol\theta_i\|^2|\mathcal{F}_{i-1}]-\E\|\boldsymbol\theta_i\|^2+\E\|\boldsymbol\theta_i\|^2\Big]^{\frac p2}\Big)\\
&\le&  C_p\Big(\sum_{i=1}^n\E\|\boldsymbol\theta_i\|^p+ \big(\sum_{i=1}^n\E\|\boldsymbol\theta_i\|^2\big)^{\frac p2}+
\E\big(\sum_{i=1}^n(\E[\|\boldsymbol\theta_i\|^2|\mathcal{F}_{i-1}]-\E\|\boldsymbol\theta_i\|^2)\big)^{\frac p2}\Big)\\
&\le& C_p\Big(\sum_{i=1}^n\E\|\boldsymbol\theta_i\|^p+ \big(\sum_{i=1}^n\E\|\boldsymbol\theta_i\|^2\big)^{\frac p2}+
n^{\frac p2-1}\sum_{i=1}^n\E|\E[\|\boldsymbol\theta_i\|^2|\mathcal{F}_{i-1}]-\E\|\boldsymbol\theta_i\|^2|^{\frac p2}\Big).
\end{eqnarray*}
Thus we can get the result.
\end{proof}

\begin{proof}[Proof of Lemma \ref{lem:shao}]
We first prove the case ${p'}=2$ which is a extension of \cite{Rio93} on $\HH$. The fact that $(\boldsymbol X_i)_{1\le i\le n}$ is zero mean implies
\begin{eqnarray*}
\E\Big\|\sum_{i=1}^n \boldsymbol X_i\Big\|^2&=&\sum_{1\le i\le n}\sum_{1\le j\le n}\E\Ll \boldsymbol  X_i,  \boldsymbol X_j\Rr\\
&=&\sum_{1\le i\le n}\E\| \boldsymbol  X_i\|^2+2\sum_{1\le i<j\le n}\big(\E\Ll  \boldsymbol X_i, \boldsymbol  X_j\Rr-\Ll\E  \boldsymbol X_i,\E  \boldsymbol X_j\Rr\big)\\
&\le&\sum_{1\le i\le n}\E\|\boldsymbol  X_i\|^2+2\sum_{1\le i<j\le n} 18\int_0^{\alpha(j-i)}Q_{\| \boldsymbol X_i\|}^2(u)\dif u,
\end{eqnarray*}
where the last line follows Lemma \ref{lem:Merl} and $\alpha(j-i)\ge\bar\alpha$ therein.
For the second term, a straight calculation yields
\begin{eqnarray}\label{e:Merl1}
&&\sum_{1\le i< j\le n}\int_0^{\alpha(j-i)}Q_{\| \boldsymbol X_i\|}^2(u)\dif u \\
&=&\sum_{1\le i<n}\int_0^{1} \sum_{i< j\le n}1_{\{\alpha(j-i)>u\}}Q_{\| \boldsymbol X_i\|}^2(u) \dif u
\nonumber\\
&\le& \sum_{1\le i\le n}\Big(\int_0^1 \big(\sum_{i< j\le n}1_{\{\alpha(j-i)>u\}}\big)^{\frac{p}{p-2}}\dif u\Big)^{\frac{p-2}{p}}
\Big(\int_0^1  Q_{\| \boldsymbol X_i\|}^p(u)\dif u\Big)^{\frac{2}{p}}.\nonumber
\end{eqnarray}
Since $\sum_{1\le j\le n}1_{\{\alpha(j)>u\}}=k$ if and only if $\alpha(k+1)\le u<\alpha(k)$, then we have
\begin{eqnarray*}
\int_0^1 \big(\sum_{i< j\le n}1_{\{\alpha(j-i)>u\}}\big)^{\frac{p}{p-2}}\dif u
&\le&\int_0^1 \big(\sum_{j=1}^n 1_{\{\alpha(j)>u\}}\big)^{\frac{p}{p-2}}\dif u\\
&=&\sum_{k=1}^\infty \int^{\alpha(k)}_{\alpha(k+1)}\big(\sum_{j=1}^n1_{\{\alpha(j)>u\}}\big)^{\frac{p}{p-2}}\dif u.\\
&\le&\sum_{k=1}^\infty k^{\frac{p}{p-2}} \alpha(k)<\infty.
\end{eqnarray*}
Notice that $\int_0^1Q_{\| \boldsymbol X_i\|}^p(u)\dif u=\E\| \boldsymbol X_i\|^{p}$, combining estimates above with \eqref{e:Merl1}, we obtain
\begin{eqnarray}\label{e:Merl}
2\sum_{1\le i<j\le n} 18\int_0^{\alpha(j-i)}Q_{\|\boldsymbol  X_i\|}^2(u)\dif u\le \sum_{1\le i\le n}C(\E\|\boldsymbol  X_i\|^p)^{\frac{2}{p}}
\end{eqnarray}
Thus, the stationary of $(\boldsymbol X_i)_{0\le i\le n}$ implies
\begin{eqnarray}\label{e:Rio}
\E\Big\|\sum_{i=1}^n \boldsymbol X_i\Big\|^2\le C n \big(\pi(\| \boldsymbol X\|^p)\big)^{\frac{2}{p}},
\end{eqnarray}

We shall prove the case $p>p'>2$ by induction on $n$. Suppose that for $1\le k<n$,
\begin{eqnarray}\label{e:induc1}
\E\Big\|\sum_{i=1}^k \boldsymbol  X_i\Big\|^{p'} \le  Ck^{\frac {p'}2}\big(\pi(\| \boldsymbol X\|^p)\big)^{\frac {p'}p}.
\end{eqnarray}
When  $k=n$, let $m=\lfloor \sqrt n\rfloor$ and $\bar\kappa(n)=\lfloor \frac{n}{2m}\rfloor$ here. The block sums are defined by
$$\boldsymbol Y_{i,1}=\sum_{j=1+2(i-1)m}^{n\wedge(2i-1)m} \boldsymbol X_j;\quad
\boldsymbol Y_{i,2}=\sum_{j=1+(2i-1)m}^{n\wedge 2im} \boldsymbol X_j\quad i\in\{1,...,\bar\kappa(n)+1\}.
$$
Then we can get
\begin{eqnarray}\label{e:shao}
\E\Big\|\sum_{i=1}^n \boldsymbol X_i\Big\|^{p'}
&=&\E\Big\|\sum_{j=1}^{\bar\kappa(n)+1}\boldsymbol Y_{j,1}+\sum_{j=1}^{\bar\kappa(n)+1}\boldsymbol Y_{j,2}\Big\|^{p'}
\le 2^{{p'}-1}\Big(\E\big\|\sum_{j=1}^{\bar\kappa(n)+1}\boldsymbol Y_{j,1}\big\|^{p'}+\E\big\|\sum_{j=1}^{\bar\kappa(n)+1}\boldsymbol Y_{j,2}\big\|^{p'}\Big)\nonumber\\
&:=&2^{{p'}-1}(I_1+I_2).
\end{eqnarray}
For $I_1$, Lemma \ref{lem:shaoBDG} implies
\begin{eqnarray*}
I_1&\le& C_{p'}\Big(\big(\sum_{i=1}^{\bar\kappa(n)+1}\E\|\boldsymbol Y_{i,1}\|^2\big)^{\frac {p'}2}+(\bar\kappa(n)+1)^{p'-1}\sum_{i=1}^{\bar\kappa(n)+1}\E\|\E[\boldsymbol Y_{i,1}|\mathcal{F}_{i-1}]\|^{p'}\\
&&+(\bar\kappa(n)+1)^{\frac {p'}2-1}\sum_{i=1}^{\bar\kappa(n)+1}\E\big|\E[\|\boldsymbol Y_{i,1}\|^2|\mathcal{F}_{i-1}]-\E\|\boldsymbol Y_{i,1}\|^2\big|^{\frac {p'}2} +\sum_{i=1}^{\bar\kappa(n)+1}\E\|\boldsymbol Y_{i,1}\|^{p'}\Big)\\
&:=&C_{p'}\Big(I_{1,1}+I_{1,2}+I_{1,3}+I_{1,4}\Big),
\end{eqnarray*}
where $\F_i=\sigma(\boldsymbol Y_{j,1},j\le i)$. For $I_{1,1}$, \eqref{e:Rio} implies
\begin{eqnarray*}
I_{1,1}
\le \big(\sum_{i=1}^{\bar\kappa(n)+1}Cm\big(\pi(\| \boldsymbol X\|^p)\big)^{\frac 2p}\big)^{\frac {p'}2}\le (Cm(\bar \kappa(n)+1))^{\frac {p'}2} \big(\pi(\| \boldsymbol X\|^p)\big)^{\frac {p'}p}
\end{eqnarray*}
For $I_{1,2}$, since $\E[\boldsymbol Y_{i,1}|\F_{i-1}]$ is $\F_{i-1}$ measurable, one has
\begin{eqnarray*}
I_{1,2}&=&(\bar\kappa(n)+1)^{p'-1}\sum_{i=1}^{\bar\kappa(n)+1}\E \Ll \E[\boldsymbol Y_{i,1}|\mathcal{F}_{i-1}], \E[\boldsymbol Y_{i,1}|\mathcal{F}_{i-1}]\|\E[\boldsymbol Y_{i,1}|\mathcal{F}_{i-1}]\|^{p'-2}\Rr\\
&=&(\bar\kappa(n)+1)^{p'-1}\sum_{i=1}^{\bar\kappa(n)+1}\E \Ll \boldsymbol Y_{i,1}, \E[\boldsymbol Y_{i,1}|\mathcal{F}_{i-1}]\|\E[\boldsymbol Y_{i,1}|\mathcal{F}_{i-1}]\|^{p'-2}\Rr\\
&=&(\bar\kappa(n)+1)^{p'-1}\sum_{i=1}^{\bar\kappa(n)+1}\sum_{j=1+2(i-1)m}^{n\wedge(2i-1)m}\E \Ll \boldsymbol   X_j, \E[\boldsymbol Y_{i,1}|\mathcal{F}_{i-1}]\|\E[\boldsymbol Y_{i,1}|\mathcal{F}_{i-1}]\|^{p'-2}\Rr.
\end{eqnarray*}
Since $( \boldsymbol X_i)_{i\ge0}$  are  zero mean, Lemma \ref{lem:Merl} and Young's inequality yield
\begin{eqnarray*}
I_{1,2}&\le& 18(\bar\kappa(n)+1)^{p'-1}\sum_{i=1}^{\bar\kappa(n)+1}\sum_{j=1+2(i-1)m}^{n\wedge(2i-1)m}\int_0^{\alpha(m)}Q_{\| \boldsymbol X_j\|}(u)Q_{\|\E[\boldsymbol Y_{i,1}|\mathcal{F}_{i-1}]\|^{p'-1}}(u)\dif u\\
&\le& \frac{18}{p'}(\bar\kappa(n)+1)^{p-1}\sum_{i=1}^{\bar\kappa(n)+1}\sum_{j=1+2(i-1)m}^{n\wedge(2i-1)m}\int_0^{\alpha(m)}Q_{\| \boldsymbol X_j\|}^{p'}(u) \dif u\\
&&+ \frac{18(p'-1)}{p'}(\bar\kappa(n)+1)^{p'-1}m\sum_{i=1}^{\bar\kappa(n)+1}\int_0^{\alpha(m)}Q_{\|\E[\boldsymbol Y_{i,1}|\mathcal{F}_{i-1}]\|^{p'-1}}^{\frac{p'}{p'-1}}(u)\dif u,
\end{eqnarray*}
For the first term, H\"{o}lder's inequality implies
\begin{eqnarray*}
\int_0^{\alpha(m)}Q_{\| \boldsymbol X_j\|}^{p'}(u) \dif u &\le& \big(\int_0^11_{\{\alpha(m)>u\}}\dif u\big)^{1-\frac {p'}p} \big(\int_0^1Q_{\| \boldsymbol X_j\|}^p(u)\big)^{\frac {p'}p}\\
&=& \alpha(m)^{1-\frac {p'}p}(\E\| \boldsymbol X_j\|^p)^{\frac {p'}p}.
\end{eqnarray*}
For the second term, similar calculation implies
\begin{eqnarray*}
\int_0^{\alpha(m)}Q_{\|\E[\boldsymbol Y_{i,1}|\mathcal{F}_{i-1}]\|^{p'-1}}^{\frac {p'}{p'-1}}(u)\dif u
&\le&
\big(\int_0^1 1_{\{\alpha(m)>u\}}\dif u\big)^{1-\frac {p'}p}
\big(\int_0^1 Q_{\|\E[\boldsymbol Y_{i,1}|\mathcal{F}_{i-1}]\|^{p'-1}}^{\frac {p}{p'-1}}(u)\dif u\big)^{\frac {p'}p}\\
&=& \alpha(m)^{1-\frac {p'}p}\big(\E[\|\E[\boldsymbol Y_{i,1}|\F_{i-1}]\|^{(p'-1)\frac{p}{p'-1}}]\big)^{\frac {p'}p}\\
&\le& \alpha(m)^{1-\frac {p'}p}(\E\|\boldsymbol Y_{i,1}\|^{p})^{\frac {p'}p}\\
&\le&\alpha(m)^{1-\frac {p'}p} m^{p'} \big(\pi(\| \boldsymbol X\|^p)\big)^{\frac {p'}p}.
\end{eqnarray*}
Then we have,
\begin{eqnarray*}
I_{1,2} &\le& \frac{18}{p'}(\bar\kappa(n)+1)^{p'-1}\sum_{i=1}^{\bar\kappa(n)+1}\sum_{j=1+2(i-1)m}^{n\wedge(2i-1)m}\alpha(m)^{1-\frac {p'}p}(\E\| \boldsymbol X_j\|^p)^{\frac {p'}p}\\
&&+ \frac{18(p'-1)}{p'}(\bar\kappa(n)+1)^{p'-1}m\sum_{i=1}^{\bar\kappa(n)+1}\alpha(m)^{1-\frac {p'}p} m^{p'} \big(\pi(\| \boldsymbol X\|^p)\big)^{\frac {p'}p}\\
&\le& \left(\frac{18}{p'}(\bar \kappa(n)+1)^{p'}m+\frac{18(p'-1)}{p'}(\bar\kappa(n)+1)^{p'}m^{p'+1} \right)\alpha(m)^{1-\frac {p'}p}\big(\pi(\|\boldsymbol  X\|^p)\big)^{\frac {p'}p}\\
&\le& 36(\bar\kappa(n)+1)^{p'}m^{p'+1}\big(Ce^{-\beta m}\big)^{1-\frac {p'}p}\big(\pi(\| \boldsymbol X\|^p)\big)^{\frac {p'}p},
\end{eqnarray*}
where the last line follows \eqref{e:alpha}.
For $I_{1,3}$, we denote
$$\boldsymbol {\bar Y}_{i,1}=\E[\|\boldsymbol Y_{i,1}\|^2|\mathcal{F}_{i-1}]-\E\|\boldsymbol Y_{i,1}\|^2,$$
$\mathcal{F}_{i-1}-$measurable. Following the definition of $\boldsymbol Y_{i,1}$ and  \citet[Corollary]{Davy1968}, one has
\begin{eqnarray*}
\E|\boldsymbol {\bar Y}_{i,1}|^{\frac {p'}2}&=& \E\Big[|\boldsymbol {\bar Y}_{i,1}|^{\frac {p'}2-1}\mathrm{sgn}(\boldsymbol {\bar Y}_{i,1})\big(\E[\|\boldsymbol Y_{i,1}\|^2|\mathcal{F}_{i-1}]-\E\|\boldsymbol Y_{i,1}\|^2\big)\Big]\\
&=&\sum_{j,l=1+2(i-1)m}^{n\wedge(2i-1)m}\E\Big[|\boldsymbol {\bar Y}_{i,1}|^{\frac {p'}2-1}\mathrm{sgn}(\boldsymbol {\bar Y}_{i,1})\big(\E[\Ll \boldsymbol X_j,  \boldsymbol X_l\Rr|\mathcal{F}_{i-1}]-\E[\Ll \boldsymbol X_j, \boldsymbol X_l\Rr]\big)\Big]\\
&=&\sum_{j,l=1+2(i-1)m}^{n\wedge(2i-1)m}\Big\{\E\big[|\boldsymbol {\bar Y}_{i,1}|^{\frac {p'}2-1}\mathrm{sgn}(\boldsymbol {\bar Y}_{i,1})\Ll \boldsymbol X_j,  \boldsymbol X_l\Rr\big]-\E\big[|\boldsymbol {\bar Y}_{i,1}|^{\frac {p'}2-1}\mathrm{sgn}(\boldsymbol {\bar Y}_{i,1})\big]\E[\Ll \boldsymbol X_j,  \boldsymbol X_l\Rr]\Big\}\\
&\le& 12\sum_{j,l=1+2(i-1)m}^{n\wedge(2i-1)m} \alpha(m)^{1-\frac 2p-\frac{p'-2}{p'}}\big[\E|\Ll \boldsymbol X_j,  \boldsymbol X_l\Rr|^{\frac p2}\big]^{\frac 2p} \big[\E[|\boldsymbol {\bar Y}_{i,1}|^{\frac {p'}2-1}]^{\frac{p'}{p'-2}}\big]^{\frac{p'-2}{p'}}\\
&\le& 12m^2 \alpha(m)^{\frac 2{p'}-\frac 2p} \big(\pi(\| \boldsymbol X\|^p)\big)^{\frac 2p}\big[\E|\boldsymbol {\bar Y}_{i,1}|^{\frac {p'}2}\big]^{\frac{p'-2}{p'}}.
\end{eqnarray*}
Thus, we can get
\begin{eqnarray*}
\E|\boldsymbol {\bar Y}_{i,1}|^{\frac {p'}2}&\le& 12^{\frac {p'}2}m^{p'}\alpha(m)^{1-\frac {p'}p}\big(\pi(\| \boldsymbol X\|^p)\big)^{\frac {p'}p}.
\end{eqnarray*}
Then we have
\begin{eqnarray*}
I_{1,3}&\le&( 1+\bar \kappa(n))^{\frac {p'}2} 12^{\frac {p'}2}m^{p'}\alpha(m)^{1-\frac {p'}p}(\E\|\boldsymbol X\|^p)^{\frac {p'}p}\\
&\le& 12^{\frac {p'}2} (mn)^{\frac {p'}2} \big(Ce^{-\beta m}\big)^{1-\frac {p'}p}\big(\pi(\| \boldsymbol X\|^p)\big)^{\frac {p'}p}.
\end{eqnarray*}
For $I_{1,4}$, the induction hypothesis \eqref{e:induc1} implies
\begin{eqnarray*}
I_{1,4}&\le& \bar C(\bar \kappa(n)+1)m^{\frac {p'}2}\big(\pi(\| \boldsymbol X\|^p)\big)^{\frac {p'}p}\\
&\le& \bar C n^{\frac12+\frac {p'}4} \big(\pi(\| \boldsymbol X\|^p)\big)^{\frac {p'}p}.
\end{eqnarray*}
Combining the estimate of $I_{1,1}$, $I_{1,2}$, $I_{1,3}$ and $I_4$, one has
\begin{eqnarray*}
	I_{1}&\le& C_{p'}\Big((Cm( \bar\kappa(n)+1))^{\frac {p'}2}+ 36(\bar\kappa(n)+1)^{p'}m^{p'+1} \big(Ce^{-\beta m}\big)^{1-\frac {p'}p}\\
	&&\quad\quad+12^{\frac {p'}2}(mn)^{\frac {p'}2} \big(Ce^{-\beta m}\big)^{1-\frac {p'}p}+ \bar Cn^{\frac12+\frac {p'}4}\Big)\big(\pi(\|\boldsymbol  X\|^p)\big)^{\frac {p'}p}\\
	&\le& C_{p'}\Big(C_{1,p'}n^{\frac {p'}2}+ C_{2,p'}n^{\frac 34p'} e^{-\beta\sqrt n(1-\frac {p'}p)}+ \bar C n^{\frac12+\frac {p'}4}\Big)\big(\pi(\| \boldsymbol X\|^p)\big)^{\frac {p'}p}.
	\end{eqnarray*}
Similarly,
\begin{eqnarray*}
I_{2}&\le& C_{p'}\Big(C_{1,p'}n^{\frac {p'}2}+ C_{2,p'}n^{\frac 34p'} e^{-\beta\sqrt n(1-\frac {p'}p)}+ \bar C n^{\frac12+\frac {p'}4}\Big)\big(\pi(\| \boldsymbol X\|^p)\big)^{\frac {p'}p}.
\end{eqnarray*}
Combining the estimate of $I_1$, $I_2$ with \eqref{e:shao}, we have
\begin{eqnarray*}
\E\Big\|\sum_{i=1}^nX_i\Big\|^{p'}
&\le& 2^{p'}C_{p'}\Big(C_{1,p'}n^{\frac {p'}2}+ C_{2,p'}n^{\frac 34p'} e^{-\beta\sqrt n(1-\frac {p'}p)}+ \bar C n^{\frac12+\frac {p'}4}\Big)\big(\pi(\| \boldsymbol X\|^p)\big)^{\frac {p'}p}\\
&\le& \bar C n^{\frac {p'}2}\big(\pi(\| \boldsymbol X\|^p)\big)^{\frac {p'}p},
\end{eqnarray*}
here we take $\bar C/3\ge 2^{p'} C_{p'} C_{1,p'}$, $n$ is large enough.

\end{proof}

\section{The proof of Lemmas in Section \ref{ss:blocking}}
\begin{proof}[Proof of Lemma \ref{lem:Y}]
Following Lemma \ref{lem:shao}, we can get the result immediately.
\end{proof}

\begin{proof}[Proof of Lemma \ref{lem:bigblock}]

{ For $m$ large enough, \citet[Lemma 2.1]{Berbee1987} implies that one can construct independent random variables $\boldsymbol{\tilde Y}_{m,j}$ distributed as $\boldsymbol Y_{m,j}$ for $j=1,...,\kappa(2^{m+1})$ on a richer probability space and
\begin{eqnarray*}
\PP(\boldsymbol Y_{m,j}\neq\boldsymbol{\tilde Y}_{m,j}\text{~~for~some~}1\le j\le \kappa(2^{m+1}))&\le& \kappa(2^{m+1})\beta(m_2)\\
&\le& C \kappa(2^{m+1})  e^{-\beta m_2}.
\end{eqnarray*}}
The Markov inequality implies
\begin{eqnarray*}
&&\PP\Big(\max_{1\le i\le \kappa(2^{m+1})}\|\sum_{j=1}^{i}(\boldsymbol Y_{m,j}-\boldsymbol{\tilde Y}_{m,j})\|\ge x\Big)\\
&\le& \E\Big[\max_{1\le i\le \kappa(2^{m+1})}\|\sum_{j=1}^{i}(\boldsymbol Y_{m,j}-\boldsymbol{\tilde Y}_{m,j})\|^{p'}\Big]x^{-p'}\\
&=& x^{-p'}\E\Big[\max_{1\le i\le \kappa(2^{m+1})	}\|\sum_{j=1}^{i}(\boldsymbol Y_{m,j}-\boldsymbol{\tilde Y}_{m,j})\|^{p'} 1_{\{\boldsymbol Y_{m,j}\neq\boldsymbol{\tilde Y}_{m,j}\mathrm{~for~some~}j=1,...,i\}}\Big].
\end{eqnarray*}
Notice that
\begin{eqnarray*}
1_{\{\boldsymbol Y_{m,j}\neq\boldsymbol{\tilde Y}_{m,j}\mathrm{\mathrm{~for~some~}}j=1,...,i\}}\le 1_{\{\boldsymbol Y_{m,j}\neq\boldsymbol{\tilde Y}_{m,j}\mathrm{~for~some~}j=1,...,r\}}\quad\mathrm{for}~i\le r.
\end{eqnarray*}
Then we have
\begin{eqnarray*}
&&\E\Big[\max_{1\le i\le \kappa(2^{m+1})}\|\sum_{j=1}^{i}(\boldsymbol Y_{m,j}-\boldsymbol{\tilde Y}_{m,j})\|^{p'}1_{\{\boldsymbol Y_{m,j}\neq\boldsymbol{\tilde Y}_{m,j}\mathrm{\mathrm{~for~some~}}j=1,...,i \}}\Big]\\
&\le&\E\Big[\max_{1\le i\le \kappa(2^{m+1})}\|\sum_{j=1}^{i}(\boldsymbol Y_{m,j}-\boldsymbol{\tilde Y}_{m,j})\|^{p'}1_{\{\boldsymbol Y_{m,j}\neq\boldsymbol{\tilde Y}_{m,j}\mathrm{~for~some~}j=1,...,\kappa(2^{m+1})\}}\Big]\\
&\le&\Big\{\E\Big[\max_{1\le i\le \kappa(2^{m+1})}\|\sum_{j=1}^{i}(\boldsymbol Y_{m,j}-\boldsymbol{\tilde Y}_{m,j})\|^{2p'}\Big]\Big\}^{\frac12} \Big\{\E[1_{\{\boldsymbol Y_{m,j}\neq\boldsymbol{\tilde Y}_{m,j}\mathrm{~for~some~}j=1,...,\kappa(2^{m+1}) \}}]\Big\}^{\frac12}.
\end{eqnarray*}
A straight calculation yields
\begin{eqnarray*}
\E\Big[\max_{1\le i\le \kappa(2^{m+1})}\|\sum_{j=1}^{i}(\boldsymbol Y_{m,j}-\boldsymbol{\tilde Y}_{m,j})\|^{2p'}\Big]
&\le& \E\Big[\sum_{j=1}^{\kappa(2^{m+1})}\|\boldsymbol Y_{m,j}-\boldsymbol{\tilde Y}_{m,j}\|^{2p'}\Big]\\
&\le& C_{p'}\kappa(2^{m+1})^{2p'-1}\sum_{j=1}^{\kappa(2^{m+1})}(\E\|\boldsymbol Y_{m,j}\|^{2p'}+\E\|\boldsymbol{\tilde Y}_{m,j}\|^{2p'})\\
&\le& C_{p'}\kappa(2^{m+1})^{2p'}m_1^{p'},
\end{eqnarray*}
where the last line follows Lemma \ref{lem:Y} and the fact $\boldsymbol Y_{m,j}\overset{\mathscr{D}}{=}\boldsymbol{\tilde Y}_{m,j}$. For the indicator function part, we have
\begin{eqnarray*}
\E[1_{\{\boldsymbol Y_{m,j}\neq\boldsymbol{\tilde Y}_{m,j}\mathrm{~for~some~}j=1,...,\kappa(2^{m+1}) \}}]
&=&\PP(\boldsymbol Y_{m,j}\neq\boldsymbol{\tilde Y}_{m,j}\mathrm{~for~some~}j=1,...,\kappa(2^{m+1}) )\\
&\le& C\kappa(2^{m+1}) e^{-\beta m_2}.
\end{eqnarray*}
Thus,
\begin{eqnarray*}
\E\Big[\max_{1\le i\le \kappa(2^{m+1})}\|\sum_{j=1}^{i}(\boldsymbol Y_{m,j}-\boldsymbol{\tilde Y}_{m,j})\|^{p'}1_{\{\boldsymbol Y_{m,j}\neq\boldsymbol{\tilde Y}_{m,j}\mathrm{~for~some~}j=1,...,i\}}\Big]\le C m_1^{\frac {p'}2}\kappa(2^{m+1})^{p'+\frac12}e^{-\frac {\beta} 2 m_2}.
\end{eqnarray*}
Taking $x=2^{((1-\alpha_1)/p'+\alpha_1/2)m}\sqrt{\log 2^m}$ and $C^*\ge 2(1-\alpha_1)(p'-\frac12)/\beta$, we can get
\begin{eqnarray*}
&&\sum_{m=1}^\infty \PP\left(\max_{1\le i\le \kappa(2^{m+1})}\|\sum_{j=1}^{i}(\boldsymbol Y_{m,j}-\boldsymbol{\tilde Y}_{m,j})\|\ge x\right)\\
&\le& \sum_{m=1}^\infty Cx^{-p'} 2^{\frac {p'}2\alpha_1 m} 2^{(p'+\frac12)(1-\alpha_1)m}\exp\{-\frac{\beta}{2}m_2\}\\
&\le& \sum_{m=1}^\infty C (\log 2^m)^{-\frac {p'}2} 2^{(p'-\frac12)(1-\alpha_1)m}\exp\{-\frac{\beta}{2}C^*\log 2^m\}
<\infty.
\end{eqnarray*}
  The Borel-Cantelli lemma yields
\begin{eqnarray*}
\max_{1\le i\le \kappa(2^{m+1})}\|\sum_{j=1}^{i}(\boldsymbol Y_{m,j}-\boldsymbol{\tilde Y}_{m,j})\|=o(2^{((1-\alpha_1)/p'+\alpha_1/2)m}\sqrt{\log 2^m}),\quad \text{a.s.}.
\end{eqnarray*}

For any $j\le \kappa(2^{m+1})$, \eqref{e:bigblock1} immediately implies \eqref{e:bigblock2}.
\end{proof}

\begin{proof}[Proof of Lemma \ref{lem:smallblock}]
Similar with the proof of Lemma \ref{lem:bigblock}, we can construct independent random variables $\boldsymbol {\tilde Z}_{m,j}$ distributed as $\boldsymbol Z_{m,j}$ for $j=1,...,1+\kappa(2^{m+1})$ on a richer probability space and
\begin{eqnarray*}
	\PP(\boldsymbol Z_{m,j}\neq\boldsymbol {\tilde Z}_{m,j}\text{~~for~some~}1\le j\le \kappa(2^{m+1})+1)&\le& C\kappa(2^{m+1})\beta(m_1).
\end{eqnarray*}
Considering the following probability,
\begin{eqnarray*}
&&\PP\Big(\max_{2^m+1\le i\le n}\big\|\sum_{\ell\in\mathcal{J}(m)\cap[2^{m+1},i]}  \boldsymbol{X_\ell}\big\|\ge x\Big)\nonumber\\
&\le& \PP\Big(\max_{2^m+1\le i\le n}\big\|\sum_{j=1}^{\kappa(i)}(\boldsymbol{Z}_{m,j}-\boldsymbol {\tilde Z}_{m,j})+\sum_{j=\kappa(i)(m_1+m_2)+m_1+1}^{\left(\kappa(i)(m_1+m_2)+m_1\right)\vee i} \boldsymbol{X}_{j}\big\|+ \max_{2^m+1\le i\le n}\|\sum_{j=1}^{\kappa(i)}\boldsymbol {\tilde Z}_{m,j}\|\ge x\Big)\nonumber\\
&\le& \PP\Big(\max_{2^m+1\le i\le n}\|\sum_{j=1}^{\kappa(i)}\boldsymbol {\tilde Z}_{m,j}\|\ge x/2\Big)\\
&&+C_{p'} x^{-p'}\Big(\E\big[\max_{2^m+1\le i\le n}\|\sum_{j=1}^{\kappa(i)}(\boldsymbol Z_{m,j}-\boldsymbol {\tilde Z}_{m,j})\|^{p'}\big]
+\E\big[\max_{2^m+1\le i\le n}\|\sum_{j=\kappa(i)(m_1+m_2)+m_1+1}^{(\kappa(i)(m_1+m_2)+m_1)\vee i}  \boldsymbol{X}_j\|^{p'}\big]\Big).\nonumber
\end{eqnarray*}

For the first term, since $\boldsymbol {\boldsymbol {\tilde Z}}_{m,j}$ are centered i.i.d. random vectors, L\'{e}vy's inequality and Lemma \ref{lem:Y} imply
\begin{eqnarray*}
&&\PP\Big(\max_{2^m+1\le i\le n}\|\sum_{j=1}^{\kappa(i)}\boldsymbol {\tilde Z}_{m,j}\|\ge x/2\Big)\\
&\le& 2\PP\big(\|\sum_{j=1}^{\kappa(n)}\boldsymbol {\tilde Z}_{m,j}\|\ge x/2\big)
\le C x^{-p'}\E\|\sum_{j=1}^{\kappa(n)}\boldsymbol {\tilde Z}_{m,j}\|^{p'}\le Cx^{-p'}\kappa(2^{m+1})^{\frac {p'}2}m_2^{\frac {p'}2}.
\end{eqnarray*}
Taking $x=2^{m(1-\alpha_1)/2}\log 2^m$, one has
\begin{eqnarray}\label{e:smallb1}
\sum_{m=1}^\infty\PP\Big(\max_{2^m+1\le i\le n}\|\sum_{j=1}^{\kappa(i)}\boldsymbol {\tilde Z}_{m,j}\|\ge x/2\Big)<\infty.
\end{eqnarray}
For the second term, similar with the estimate of Lemma \ref{lem:bigblock}, a straight calculation implies
\begin{eqnarray*}
\E\big[\max_{2^m+1\le i\le n }\|\sum_{j=1}^{\kappa(i)}(\boldsymbol{Z}_{m,j}-\boldsymbol {\tilde Z}_{m,j})\|^{p'}\big] 
&=&\E\big[\max_{1\le i\le \kappa(n) }\|\sum_{j=1}^{i}(\boldsymbol{Z}_{m,j}-\boldsymbol {\tilde Z}_{m,j})\|^{p'}1_{\{\boldsymbol{Z}_{m,j}\neq\boldsymbol {\tilde Z}_{m,j} \mathrm{~for ~some~} j=1,...i\}}\big] \nonumber\\
&\le&C m_2^{\frac {p'}2}\kappa(2^{m+1})^{p'+\frac12}e^{-\frac{\beta}{2} m_1}.
\end{eqnarray*}
For the last term, following \citet[Proposition 1]{Wu07} and Lemma \ref{lem:Y}, one has
\begin{eqnarray*}
	\Big(\E[\max_{1\le i\le 2^r }\|\sum_{j=1}^i  \boldsymbol X_j\|^{p'}]\Big)^{\frac1{p'}}&\le&\sum_{i=0}^r2^{(r-i)/p'}\Big(\E\|\sum_{j=1}^{2^i}\boldsymbol  X_j\|^{p'}\Big)^{\frac1{p'}}\\
	&\le& C\sum_{i=0}^r2^{(r-i)/p'}(2^i)^{\frac12}\le C2^{\frac r2},
\end{eqnarray*}
which yields
\begin{eqnarray*}
\E\big[\max_{1\le i\le m_2}\|\sum_{j=1}^{i}  \boldsymbol X_j\|^{p'}\big]\le  m_2^{p'/2}.
\end{eqnarray*}
Then we have
\begin{eqnarray*}
&&\E\big[\max_{2^m+1\le i\le n}\|\sum_{j=\kappa(i)(m_1+m_2)+m_1+1}^{(\kappa(i)(m_1+m_2)+m_1)\vee i}  \boldsymbol X_j\|^{p'}\big]\\
&\le&\kappa(2^{m+1})\E\big[\max_{1\le i\le m_2}\|\sum_{j=1}^{i}  \boldsymbol X_j\|^{p'}\big]\le  \kappa(2^{m+1})m_2^{\frac{p'}2}
\end{eqnarray*}
Thus,
\begin{eqnarray}\label{e:smallb2}
&&\sum_{m=1}^\infty x^{-p'}C_{p'}\Big(\E\big[\max_{2^m+1\le i\le n}\|\sum_{j=1}^{\kappa(i)}(\boldsymbol Z_{m,j}-\boldsymbol {\tilde Z}_{m,j})\|^{p'}\big]
+\E\big[\max_{1\le i\le m_2}\|\sum_{j=1}^{i}  \boldsymbol X_j\|^{p'}\big]\Big)\\
&\le&\sum_{m=1}^\infty x^{-p'} C_{p'}\big( m_2^{\frac {p'}2}\kappa(2^{m+1})^{p'+\frac12}e^{-\frac{\beta}{2} m_1}  +\kappa(2^{m+1})m_2^{p'/2}\big)<\infty.\nonumber
\end{eqnarray}
Combining \eqref{e:smallb1} and \eqref{e:smallb2}, the Borel-Cantelli lemma implies
$$
\max_{2^m+1\le i\le n}\big\|\sum_{\ell\in\mathcal{J}(m)\cap[2^{m+1},i]} \boldsymbol X_\ell\big\|=o(2^{m(1-\alpha_1)/2}\log 2^m), \quad \text{a.s..}
$$

Similarly estimate holds for any i.i.d. centered Gaussian random vectors $\eta_\ell$.
\end{proof}

\section{ Proof of examples in Section \ref{sec:example}}\label{appen:ex}
\subsection{Proof of Example \ref{ex:1}}
\begin{lem}\label{lem:ergodic}
	Under Assumption {\bf (A3)}, $(\boldsymbol X_k)_{k\ge0}$ is exponential ergodic with invariant measure $\pi$, that is,
	\begin{eqnarray*}
		\sup_{|f|\le 1+V}\big|\E[f(\boldsymbol X_n)|\boldsymbol X_0=\boldsymbol x]-\pi(f)\big|\le V(\boldsymbol x)e^{-C_\gamma n},
	\end{eqnarray*}
	where $C_\gamma>0$ depends on $\gamma$.
\end{lem}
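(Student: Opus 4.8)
The plan is to recognise {\bf (A3)} as a geometric Foster--Lyapunov drift condition and then invoke the Meyn--Tweedie theory of $V$-uniform (geometric) ergodicity. First I would write $P$ for the one-step transition kernel and recast the drift inequality in the standard form of the condition (V4): since $\E[V(\boldsymbol X_1)\mid \boldsymbol X_0=\boldsymbol x]\le \gamma V(\boldsymbol x)+K\,\mathbf 1_{\mathbf C}(\boldsymbol x)$ with $0<\gamma<1$, we have
\[
PV \le V-(1-\gamma)V+K\,\mathbf 1_{\mathbf C},
\]
that is, a geometric drift towards the compact set $\mathbf C$ with rate $1-\gamma$ and offset $K$, which is exactly the input required by the geometric ergodicity theorem.

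The second step, and the technical heart, is to verify that $\mathbf C$ is a petite set, which is where the topological hypotheses of {\bf (A3)} enter. For a $\psi$-irreducible, aperiodic, Feller chain one shows that the chain is a $T$-chain, and for such chains every compact set is petite; aperiodicity then upgrades petiteness to the one-step-type minorisation (smallness) needed below. This is precisely the content of \citet[Theorem 2.1]{Tweedie94}, applied to $\mathbf C$.

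With the geometric drift in hand and $\mathbf C$ petite, I would invoke the $V$-uniform ergodic theorem: it produces a unique invariant measure $\pi$ with $\pi(V)<\infty$ together with constants $R<\infty$ and $\rho\in(0,1)$, both determined by $\gamma$, $K$ and the minorisation on $\mathbf C$, such that $\sup_{|f|\le V}\bigl|\E[f(\boldsymbol X_n)\mid \boldsymbol X_0=\boldsymbol x]-\pi(f)\bigr|\le R\,V(\boldsymbol x)\,\rho^n$ for all $n$. Since $V\ge 1$ gives $1+V\le 2V$, replacing $V$ by $1+V$ only rescales $R$; absorbing $R$ and choosing $C_\gamma\in(0,-\log\rho)$ so that $R\rho^n\le e^{-C_\gamma n}$ for all large $n$ yields the claimed bound, with $C_\gamma$ depending on $\gamma$ through the drift rate. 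The exponential $\beta$-mixing used throughout the paper is then a standard consequence, via \citet[Proposition 1]{Davy1973}.

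The hard part will be the petiteness of the compact set $\mathbf C$ in the present (possibly infinite-dimensional Hilbert) state space. Unlike on $\R^d$, where one reads off a minorisation on compacts almost for free, here one must genuinely combine the Feller property with $\psi$-irreducibility to pass from topological irreducibility to the measure-theoretic $T$-chain condition that makes compact sets petite; everything after that is a direct application of the cited theorems.
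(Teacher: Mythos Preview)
Your proposal is correct and follows the standard Meyn--Tweedie route: recast {\bf (A3)} as the geometric drift condition (V4), argue that the compact set $\mathbf C$ is petite via the Feller/$T$-chain machinery, and then read off $V$-uniform ergodicity. The paper arrives at the same conclusion but by a slightly different path: rather than citing the geometric ergodicity theorem directly, it works inside the \emph{subgeometric} framework of \citet[Theorem 2.1]{Tweedie94}, introducing the time-inhomogeneous Lyapunov functions $V^n(\boldsymbol x)=e^{C_\gamma n}V(\boldsymbol x)$ and rate $r(n)=C_\gamma e^{C_\gamma n}$, and then verifies the drift condition $PV^{n+1}+r(n)V\le V^n+b\,r(n)\mathbf 1_{\mathcal C}$ by hand, choosing $C_\gamma$ small enough that $\gamma e^{C_\gamma}-1+C_\gamma<0$. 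So the paper is effectively re-deriving the exponential case as a special instance of the subgeometric theorem, which makes the dependence of $C_\gamma$ on $\gamma$ explicit; your approach is shorter and more direct but leaves the constant implicit in the black-box theorem.

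Two small remarks. First, your citation of \citet[Theorem 2.1]{Tweedie94} for the petiteness of $\mathbf C$ is misplaced: that theorem is the ergodic-rate result itself, not the ``compact sets are petite for $T$-chains'' fact, which lives in \citet{MeTw93b}. Second, absorbing the prefactor $R$ into the exponential by shrinking $C_\gamma$ only gives $R\rho^n\le e^{-C_\gamma n}$ for $n$ sufficiently large; to get the stated bound for all $n$ you should also note that the left-hand side is trivially bounded for small $n$ since $|f|\le 1+V$ and $\pi(V)<\infty$, and adjust $C_\gamma$ accordingly. Neither point affects the substance of the argument.
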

\begin{proof}
	We give the proof of the ergodicity of $(\boldsymbol X_n)_{n\ge0}$ following \citet[Theorem 2.1]{Tweedie94}. To verify condition $(7)$ of \citet{Tweedie94}, let
	\begin{align*}
	V^n(\boldsymbol x)=e^{C_\gamma n}V(\boldsymbol x)\quad r(n)= C_\gamma e^{C_\gamma n},
	\end{align*}
	the constant $C_\gamma>0$ will be chosen later. Following equation (\ref{e:lya}), one has
	\begin{eqnarray*}
		&&PV^{n+1}(\boldsymbol x)+r(n)V(\boldsymbol x)\\
		&\le& e^{C_\gamma (n+1)}\left(\gamma V(\boldsymbol x)+K\right)+ C_\gamma e^{C_\gamma n}V(\boldsymbol x)\\
		&=& e^{C_\gamma n}V(\boldsymbol x)+  C_\gamma e^{C_\gamma n}\left((\frac{C_\gamma-1}{C_\gamma}+\frac 1{C_\gamma}\gamma e^{C_\gamma})V(\boldsymbol x)+\frac 1{C_\gamma}  K e^{C_\gamma}\right).
	\end{eqnarray*}
	Choosing $C_\gamma$ small enough such that $\gamma e^{C_\gamma}-1+C_\gamma<0$, we can get
	\begin{eqnarray*}
		PV^{n+1}(\boldsymbol x)+r(n)V(\boldsymbol x)
		&\le& V^n(\boldsymbol x)+br(n)1_{\{\boldsymbol x\in \mathcal{C}\}},
	\end{eqnarray*}
	where $b=K  e^{C_\gamma}/C_\gamma$ and $\mathcal{C}=\{\boldsymbol x:V(\boldsymbol x)\le \frac{ K e^{C_\gamma}}{1-C_\gamma-\gamma e^{C_\gamma}}\}$. We deduce that $(\boldsymbol X_n)_{n\ge0}$ is ergodic with invariant measure $\pi$.	
\end{proof}
Recall the definition of $\beta-$mixing in \citet[Proposition 1]{Davy1973} which is equivalent with \eqref{e:mixing}.

\begin{definition}
	The $\beta-$mixing coefficients are given by:
	\begin{eqnarray*}
		\beta(n)=\int\sup_{0\le f\le1}|P_nf(x)-\int f\dif Q|\dif Q,
	\end{eqnarray*}
	where $Q$ is a stationary distribution. The process $\boldsymbol X_n$ is $\beta-$mixing if $\lim_{n\to\infty}\beta(n)=0$; is $\beta-$mixing with exponential
	decay rate if $\beta(n)\le Ce^{-cn}$ for some $C>0$ and $c>0$.
\end{definition}

\begin{lemma}\label{lem:mixing}
	Under Assumption {\bf (A3)}, one has
	\begin{eqnarray}\label{e:beta}
	\beta(n)&\le&\pi(V)e^{-C_\gamma n}.
	\end{eqnarray}
\end{lemma}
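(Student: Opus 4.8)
The plan is to obtain \eqref{e:beta} directly by feeding the ergodicity estimate of Lemma \ref{lem:ergodic} into the integral representation of the $\beta$-mixing coefficient recalled just above. First I would note that, since the Lyapunov function satisfies $V \ge 1$ under {\bf (A3)}, every test function $f$ with $0 \le f \le 1$ automatically obeys $|f| \le 1 + V$. Consequently the inner supremum in the definition of $\beta(n)$, taken over the class $\{f : 0 \le f \le 1\}$, is dominated pointwise by the supremum over the larger class $\{f : |f| \le 1 + V\}$, which is exactly what Lemma \ref{lem:ergodic} controls.

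Concretely, for each fixed $x$ the inclusion of function classes together with Lemma \ref{lem:ergodic} gives
$$
\sup_{0 \le f \le 1} \big|P_n f(x) - \pi(f)\big|
\;\le\;
\sup_{|f| \le 1+V} \big|P_n f(x) - \pi(f)\big|
\;\le\; V(x)\, e^{-C_\gamma n}.
$$
Integrating this pointwise bound against the stationary distribution $Q = \pi$ and invoking the definition recalled above yields
$$
\beta(n)
= \int \sup_{0 \le f \le 1}\big|P_n f(x) - \pi(f)\big|\,\dif\pi(x)
\;\le\; e^{-C_\gamma n}\int V(x)\,\dif\pi(x)
= \pi(V)\, e^{-C_\gamma n},
$$
which is precisely the claimed estimate.

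The only point that genuinely needs justification is the finiteness of $\pi(V)$, since otherwise the right-hand side is vacuous; this is where I would spend the argument rather than on the main chain of inequalities. Finiteness follows from the drift condition {\bf (A3)}: integrating \eqref{e:lya} against the invariant measure and using stationarity $\pi P = \pi$ gives $\pi(V) = \pi(PV) \le \gamma\,\pi(V) + K\,\pi(\textbf C)$, so that $(1-\gamma)\,\pi(V) \le K\,\pi(\textbf C) \le K$ and hence $\pi(V) \le K/(1-\gamma) < \infty$ because $0 < \gamma < 1$ and $\pi$ is a probability measure. I therefore expect no real obstacle here: the estimate is essentially a one-line consequence of Lemma \ref{lem:ergodic}, and the only subtleties are the comparison of the two function classes $\{0 \le f \le 1\} \subset \{|f| \le 1 + V\}$ and the a priori integrability of the Lyapunov function.
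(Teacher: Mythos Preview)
Your proof is correct and follows essentially the same route as the paper: both use the inclusion of function classes (the paper writes $\{0\le f\le 1\}\subset\{0\le f\le V\}$ since $V\ge 1$, you write $\{0\le f\le 1\}\subset\{|f|\le 1+V\}$) to feed the integral representation of $\beta(n)$ into Lemma~\ref{lem:ergodic}, then integrate the bound $V(x)e^{-C_\gamma n}$ against $\pi$. Your explicit verification that $\pi(V)<\infty$ via the drift condition is a welcome addition, as the paper merely asserts this without justification.
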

\begin{proof}
	Acording to Lemma \ref{lem:ergodic} and $\pi(V)< \infty$, one has
	\begin{eqnarray*}
		\beta(n)&=&\int\sup_{0\le f\le1}|\E[f(\boldsymbol X_n)|\boldsymbol X_0=\boldsymbol x]-\pi(f)|\pi(\dif \boldsymbol x)\nonumber\\
		&\le&{ \int\sup_{0\le f\le V}|\E[f(\boldsymbol X_n)|\boldsymbol X_0=\boldsymbol x]-\pi(f)|\pi(\dif \boldsymbol x)\nonumber}\\
		&\le&{ \int V(\boldsymbol x)e^{-C_\gamma n}\pi(\dif \boldsymbol x)\nonumber}\\
		&=& \pi(V)e^{-C_\gamma n}.
	\end{eqnarray*}
	Thus $(\boldsymbol X_n)_{n\ge0}$ is $\beta-$mixing with exponential decay rate.
\end{proof}
Thus condition ${\bf (A1)}$ is satisfied and $(\boldsymbol  X_n)_{n\ge0}$ satisfies ASIP with rate greater than $1/4+1/(4p-4)$.

\subsection{Proof of Example \ref{ex:3}}
It is easy to calculate that \eqref{e:AR1} has a unique stationary solution given by 
\begin{eqnarray}\label{e:AR1sol}
\boldsymbol X_k=\sum_{j=0}^{\infty}\boldsymbol A^j\boldsymbol B\boldsymbol \e_{k-j}.
\end{eqnarray}
For this exponential ergodic process, {\bf (A1)} is easy to verified. Now we verify the condition {\bf (A2)}.
\begin{eqnarray*}
	\cov(\sum_{k=0}^{n-1}\boldsymbol X_k)&=&\sum_{k=0}^{n-1}\cov(\boldsymbol X_k)+\sum_{0\le i<j\le n-1}(\cov(\boldsymbol X_i,\boldsymbol X_j)+\cov(\boldsymbol X_i,\boldsymbol X_j)^T)\\
	&=& n\cov(\boldsymbol X_0)+\sum_{0\le i<j\le n-1} (\cov(\boldsymbol X_0,\boldsymbol X_{j-i})+\cov(\boldsymbol X_0,\boldsymbol X_{j-i})^T).
\end{eqnarray*}
Following \eqref{e:AR1sol}, one has
\begin{eqnarray*}
\cov(\boldsymbol X_0)=\cov\big(\sum_{j=0}^{\infty}\boldsymbol A^j\boldsymbol B\boldsymbol \e_{-j} \big)=(\boldsymbol I-\boldsymbol A^2)^{-1}\boldsymbol B^2,
\end{eqnarray*}
and
\begin{eqnarray*}
	\cov(\boldsymbol X_0,\boldsymbol X_{j-i})&=&\cov\big(\sum_{r=0}^{\infty}\boldsymbol A^r\boldsymbol \e_{-r},\sum_{r=0}^{j-i-1}\boldsymbol A^r\boldsymbol \e_{-j-i-r}+\sum_{r=0}^{\infty}\boldsymbol  A^{j-i+r}\boldsymbol \e_{-r}\big)\\
	&=& \cov(\boldsymbol X_0)\boldsymbol A^{j-i}.
\end{eqnarray*}
Thus, we have as $n\to\infty$,
\begin{eqnarray*}
	\cov(\sum_{k=0}^{n-1}\boldsymbol X_k)/n&=&\cov(\boldsymbol X_0)+\frac1n\sum_{k=1}^{n-1}k\big(\cov(\boldsymbol X_0,X_{n-k})+\cov(X_0,\boldsymbol X_{n-k})^T\big)\\
	&\to&  \cov(\boldsymbol X_0)+2\cov(\boldsymbol X_0)(\boldsymbol A^{-1}-\boldsymbol I)^{-1},
\end{eqnarray*}
i.e., $\boldsymbol\Gamma=\cov(\boldsymbol X_0)+2\cov(\boldsymbol X_0)(\boldsymbol A^{-1}-\boldsymbol I)^{-1}$.
It is easy to see that,
$$
\boldsymbol\Gamma \boldsymbol e_i(x)=\big(\frac{\lambda_i}{1-\lambda_i^2}+\frac{2\lambda_i^2}{(1-\lambda_i^2)(1-\lambda_i)}\big)\boldsymbol e_i(x).
$$
Thus {\bf (A2)} is satisfied by taking $\lambda_i \asymp i^{-\delta}$ and ASIP holds.


\end{appendix}
\ \\

{\bf Acknowledgements}: L. Xu is supported in part by NSFC Grant No.12071499, Macao S.A.R. grant FDCT  0090/2019/A2 and University of Macau grant  MYRG2020-00039-FST.

\bibliographystyle{plainnat}

\end{document}